\documentclass[12pt,a4paper,reqno]{amsart}
  \usepackage[margin=2.5cm]{geometry}
  \usepackage[T1]{fontenc}
  \usepackage{lmodern}
  \usepackage{microtype}
  \usepackage{amsmath,amssymb,mathtools}
  \usepackage{enumitem}
  \usepackage{hyperref}

  \hypersetup{
    hidelinks,
    pdftitle={Large Corank of Dense Random Regular Digraphs}
  }

  \usepackage{cleveref}
    \crefname{theorem}{Theorem}{Theorems}
    \crefname{theorem}{Theorem}{Theorems}
    \crefname{theorem*}{Theorem*}{Theorems}
    \crefname{lemma}{Lemma}{Lemmas}
    \crefname{lemma}{Lemma}{Lemmas}
    \crefname{remark}{Remark}{Remarks}
    \crefname{proposition}{Proposition}{Propositions}
  \crefname{notation}{Notation}{Notations}
  \crefname{claim}{Claim}{Claims}
    \crefname{definition}{Definition}{Definitions}
    \crefname{corollary}{Corollary}{Corollaries}
    \crefname{section}{Section}{Sections}
    \crefname{figure}{Figure}{Figures}
      \crefname{assumption}{Assumption}{Assumptions}

  \newtheorem{theorem}{Theorem}[section]
  \newtheorem{theorem*}{Theorem*}[section]

  \newtheorem{lemma}[theorem]{Lemma}
  
  \newtheorem{proposition}[theorem]{Proposition}

  \numberwithin{equation}{section}

  \theoremstyle{definition}

  \usepackage{xcolor}

  \DeclareMathOperator{\rank}{rank}
  \DeclareMathOperator{\corank}{corank}
  \DeclareMathOperator{\codim}{codim}
  \DeclareMathOperator{\supp}{supp}
  \DeclareMathOperator{\Ber}{Ber}
  \DeclareMathOperator{\Bin}{Bin}

  \newcommand{\R}{\mathbb R}
  \newcommand{\Q}{\mathbb Q}
  \newcommand{\Prob}{\mathbb P}
  \newcommand{\Mnd}{\mathcal M_{n,d}}
  
  \newcommand{\one}{\mathbf 1}

  \title[Large corank of dense random regular digraphs]
  {Large Corank of Dense Random Regular Digraphs}

  \vspace{1cm}

  \author{\textsc{Huaijin Liang, Kexin Yu, and Tingzhou Yu}}

  \vspace{0.3cm}

  \date{}

\begin{document}
  \raggedbottom
  \begin{abstract}
  Let $1\le k\le n$ and let $A$ be the adjacency matrix of a uniformly random $d$-regular directed graph on $n$ vertices. Suppose that $\lambda n\le d\le(1-\lambda)n$ for a fixed $0<\lambda\le1/2$. We show that there exists $c_{\lambda}>0$ depending only on $\lambda$ such that
  \[
   \Prob[\rank(A)\le n-k]\le 2e^{-c_{\lambda} nk}.
  \]
  This gives a large corank extension of the exponential singularity bound of Jain, Sah, and Sawhney.
  \end{abstract}

  \subjclass[2020]{{Primary 60B20; Secondary 15A03,
  15B52, 05C80}}
  \keywords{{rank deficiency, corank, invertibility of random
  matrices, random regular digraphs}}

  \maketitle

  \section{Introduction}

  For positive integers $d\le n$, let $\Mnd$ denote the set of all
  $n\times n$ matrices with entries in $\{0,1\}$ whose row and column
  sums are equal to $d$, and let $A$ be uniformly distributed on
  $\Mnd$. Equivalently, $A$ is the adjacency matrix of a uniformly
  random $d$-regular directed graph (digraph) on $n$ labeled vertices, where
  self-loops are allowed but multiple edges are not, and every vertex
  has in-degree and out-degree $d$.
  We study the rank of $A$ over $\R$ in the range
  $\lambda n\le d\le(1-\lambda)n$ for fixed $0<\lambda\le1/2$. Our main result is the following.
  \begin{theorem}\label{thm:main}
  Let $0<\lambda\le1/2$. There exists $c_\lambda>0$, depending only on
  $\lambda$, such that for all positive integers $n,d$ satisfying
  $\lambda n\le d\le(1-\lambda)n$, if $A$ is uniform on $\Mnd$, then
  for every integer $1\le k\le n$,
  \[
   \Prob[\rank(A)\le n-k]\le 2e^{-c_\lambda nk}.
  \]
  \end{theorem}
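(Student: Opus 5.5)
The plan is to reduce to two regimes for $k$ and treat each separately. For $k$ of order $n$ (say $k\ge \epsilon n$), a direct counting/entropy argument should suffice. For small $k$, the natural route is to reveal $A$ in pieces and exploit the fact that, conditional on a low-rank event, many rows must each fall into a fixed low-dimensional subspace. Throughout, the key external input is the Jain--Sah--Sawhney machinery for dense random regular digraphs. Concretely, we use that for any fixed (or conditionally fixed) subspace $V$, the probability that a row of $A$ lies in $V$, given the other rows, is small whenever $V$ has no structured (compressible or arithmetic) normal vector. We also use that normal vectors of $A$ are unstructured with probability $1-e^{-cn}$.

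\textbf{Step 1 (reduction to many row events).} If $\rank(A)\le n-k$, then the kernel of $A^T$ has dimension at least $k$. Choose a basis of that kernel in reduced echelon form. We get a set $I$ of $k$ rows and a set $J$ of $n-k$ further rows such that the rows in $I$ lie in $W=\mathrm{span}\{\text{rows in } J\}$, with $\dim W\le n-k$. Taking a union bound over the choices of $I$ costs $\binom nk\le e^{k\log(en/k)}$. It therefore suffices to show
\[
\Prob\bigl[\text{all rows } i\in I \text{ lie in } W_J\bigr]\le e^{-c nk}
\]
for a fixed $I$, where the bound must beat this entropy factor; this entropy factor is the main obstacle. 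For $k\le \epsilon n$ we have $k\log(en/k)\ll nk$, so entropy is affordable.

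\textbf{Step 2 (conditional row probabilities).} Condition on the rows indexed by $[n]\setminus I$. Given these, the rows in $I$ form a uniformly random $k\times n$ $0/1$ matrix with prescribed column sums $d-s_j$ and row sums $d$. Next, control the structure of $W$. By the Jain--Sah--Sawhney structure theorem (or its inverse Littlewood--Offord analogue for regular matrices), with probability $1-e^{-cnk}$ the orthogonal complement $W^\perp$ contains no vector with small least common denominator / compressible direction. This has to be shown at the subspace level, using a net argument over $k$-dimensional subspaces as in Rudelson--Vershynin's large-corank results, paying $e^{O(nk)}$ in net size against $e^{-cn}$ per row. Given an unstructured $W^\perp$, an anticoncentration estimate for the remaining rows shows that each successive row in $I$ falls in $W$ with probability at most $e^{-cn}$. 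The main technical point is that the rows of $I$ are dependent through the column-sum constraint. I would handle this by switching/coupling: reveal the rows of $I$ one at a time, and note that the conditional law of each row, given the previous ones, is a uniform random vector with prescribed sum on a set of $\ge \lambda n/2$ free coordinates (while $k\le\epsilon n$). Applying a Littlewood--Offord bound for a random $0/1$ vector with fixed sum to a vector $x\in W^\perp$ gives $e^{-cn}$ per row. Multiplying over the $k$ rows yields $e^{-cnk}$.

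\textbf{Step 3 (large $k$).} For $k\ge\epsilon n$, monotonicity in $k$ gives $\Prob[\rank\le n-k]\le\Prob[\rank\le n-\epsilon n]$, and the Step 2 bound $e^{-c\epsilon n^2}$ is already of order $e^{-c' nk}$ since $k\le n$. The factor $2$ absorbs the $e^{-cnk}$ structural failure probability. The hardest step is the subspace-level structure statement in Step 2, namely that $W^\perp$ is unstructured with probability $e^{-cnk}$ rather than merely $e^{-cn}$. Here I would first try a net over structured $k$-tuples combined with the per-row $e^{-cn}$ small-ball bound from Jain--Sah--Sawhney, applied to $n-k$ nearly independent rows.
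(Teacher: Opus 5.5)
\textbf{There is a genuine gap in Step~2.} You claim that, outside an event of probability $e^{-cnk}$, the space $W_J^\perp$ contains no compressible or arithmetically structured vector. This is false. Since $W_J$ lies in the row space of $A$, we have $\ker(A)\subseteq W_J^\perp$ for every $J$. The event that two columns $a\ne b$ of $A$ coincide puts the $2$-sparse vector $e_a-e_b$ into $\ker(A)$, and this event has probability $e^{-\Theta(n)}$. That is far larger than $e^{-cnk}$ once $k$ exceeds a constant.

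Your proposed net argument cannot repair this. A single row is orthogonal to a fixed direction such as $e_a-e_b$ with probability bounded below by a constant, not $e^{-cn}$. So one structured direction in $W_J^\perp$ costs only $e^{-\Theta(n)}$ in total.

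What is missing is a way to charge structured kernel vectors \emph{jointly} with the corank event. The paper does this in Lemma~\ref{lem:sparse-contraction}, using Cook's two-row argument:
\begin{itemize}
\item Given a left-kernel vector supported on $S$ with $|S|\le\delta n$, reveal all rows except $A_i$ ($i\in S$) and some $A_j$ ($j\notin S$) with $|\supp(A_i)\mathbin\triangle\supp(A_j)|\ge\theta n$.
\item The column sums fix $A_i+A_j$, and the kernel relation forces $A_i-A_j$ into a revealed space of dimension at most $|S|$. This has probability $e^{-cn}$.
\item Replacing the two rows by their sum loses at most one in rank.
\end{itemize}
Using transposition for the right kernel, this gives $P_k\le e^{-cnk}+e^{-cn}P_{k-1}$, which closes by induction from the Jain--Sah--Sawhney bound on $P_1$. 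Only sparse structure, not arithmetic structure, ever has to be excluded, and only in this relative form.

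\textbf{The row-by-row anticoncentration is also unjustified.} Given the rows in $J$, the rows in $I$ form a uniform $0/1$ matrix with prescribed margins. If you reveal them one at a time, however:
\begin{itemize}
\item each row's law is weighted by the number of completions of the remaining rows, so it is not uniform on a prescribed-sum set;
\item the last row is determined by the column sums, so for $k=1$ nothing random remains.
\end{itemize}
So the product of $k$ per-row factors has no basis. Even with correct laws, an LCD-type small-ball bound would require excluding arithmetic structure at level $e^{-cnk}$, which is not available.

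The paper replaces this step as follows.
\begin{itemize}
\item It chooses a random pairing of the rows, independent of $A$. Conditional on the pair sums, the differences are independent and uniform on sum-zero sign vectors (Lemma~\ref{lem:pair-product-law}).
\item At cost $n^{\lceil h/2\rceil}$, at least $\lceil h/2\rceil$ differences lie in the span of the pair sums and the selected differences (Proposition~\ref{prop:matching-reduction}).
\item The relative inequality of Hunter--Kwan--Sauermann--Sawhney (Lemma~\ref{lem:balanced-relative}), with the thin/thick argument, then gives $e^{-cnh}$ from codimension information alone. That information is controlled by Lemma~\ref{lem:visibility}, Lemma~\ref{lem:transversality} and Proposition~\ref{prop:defect-tails}.
\end{itemize}

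Finally, for $k\ge C\log n$ no new argument is needed. Conditioning the i.i.d.\ $\Ber(d/n)$ model on $\Mnd$ costs only $e^{O(n\log n)}$, so Polavarapu's bound applies (Lemma~\ref{lem:global-conditioning}). The delicate range is therefore only $k\lesssim\log n$.
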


  \subsection{Background}

  Large corank bounds go back to Kahn, Koml\'os, and Szemer\'edi in~\cite[Corollary~3]{KKS}. For an
  $n\times n$ matrix $\Xi$ with i.i.d.\ Rademacher entries, they
  proved that, for every $\varepsilon>0$, there exists an
  integer $C=C(\varepsilon)>0$ such that
  \[
   \Prob\bigl[\rank(\Xi)< n-C\bigr]\le \varepsilon^n.
  \]
  For $1\le k<n$, the event that $k+1$ specified rows of $\Xi$
  are identical has probability $2^{-nk}$ and implies
  $\rank(\Xi)\le n-k$. This suggests an upper bound of the form
  $\exp(-cnk)$. Rudelson in~\cite{Rudelson} proved that, for
  matrices $M$ with i.i.d.\ nonconstant subgaussian entries,
  \[
   \Prob\bigl[\rank(M)\le n-k\bigr]\le \exp(-c_1nk),
   \qquad 1\le k\le c_2\sqrt n,
  \]
  where $c_1,c_2>0$ depend on the entry distribution.
  For i.i.d.\ Rademacher entries, Hunter, Kwan, Sauermann, and
  Sawhney in~\cite{HKSS} extended this bound to all $1\le k\le n$.
  Polavarapu in~\cite{Polavarapu} subsequently obtained the same bound
  for independent, not necessarily identically distributed, real entries
  whose atoms are uniformly bounded away from one. See also~\cite{DaiSongWang,DaiSuWang}
  for related results on the rank and
  singular values of subgaussian matrices.

  Sharper asymptotics are known in more restricted ranges of $k$. For i.i.d.\
  Rademacher matrices, Tikhomirov in~\cite{Tikhomirov} determined the sharp exponential singularity rate $(1/2+o(1))^n$, and Hu, Song, and
  Wu in~\cite{HuSongWu2026} recently obtained the full asymptotic
  $(2+o(1))n^2 2^{-n}$. For i.i.d.\ $\Ber(p)$
  matrices with fixed $p\in(0,1/2]$, Jain, Sah, and Sawhney in~\cite{JSSRank} obtained
  the asymptotic $(1-p+o(1))^{kn}$ for fixed $k$, and
  Song and Wang in~\cite{SongWang} extended it to $k=o(\sqrt{\log n})$.

  For symmetric matrices, Han in~\cite{Han} proved an $\exp(-cnk)$ bound for
  $1\le k\le c'\sqrt n$ when the entries on and above the diagonal of
  the matrix are i.i.d.\ copies of a centered, unit variance
  subgaussian random variable, where positive constants $c, c'$ depend only on the entry distribution.

  None of these results applies directly to a uniform matrix in
  $\Mnd$. The simultaneous row and column sum
  constraints create strong dependence among the entries.
  The singularity problem for uniform directed regular matrices has
  been studied in several degree regimes. 
  Cook in~\cite[Theorem~1.2]{Cook} proved that there are absolute constants $C,c>0$ such that, whenever $ \min(d,n-d)\ge C\log^2 n$
  one has
  \[
   \Prob\bigl[\rank(A)<n\bigr]=O(d^{-c}).
  \]
  Litvak, Lytova, Tikhomirov,
  Tomczak-Jaegermann, and Youssef in~\cite{litvak2019smallest} later obtained a quantitative lower
  bound for the smallest singular value of the shifted matrix. There are
  absolute constants $C,c>0$ such that, for every fixed $z\in\mathbb C$ with $|z|\le d/6$
  \[
   \Prob\bigl[s_n(A-zI)< n^{-6}\bigr]
   \le C\frac{\log^2 d}{\sqrt d}, \qquad  C<d<\frac{cn}{(\log n)(\log\log n)}.
  \]
  The same authors in~\cite{LLTTY} proved that there is a positive constant $C$ such that for every integer $d\ge C$ and every
  $n>d$,
  \[
   \Prob\bigl[\rank(A)\le n-2\bigr]
   \le C\frac{\log^2 d}{\log n}.
  \]
  Huang in~\cite{Huang} proved a polynomial upper bound for the
  singularity probability for every fixed $d\ge3$. Nguyen and Pan in~\cite{NguyenPan} later improved
  this bound to
  \[
   \Prob\bigl[\rank(A)<n\bigr]
   \le n^{-1/3+\varepsilon}
  \]
  for every fixed $d\ge3$ and $\varepsilon>0$, and all sufficiently large $n$.

  In the dense regime, Jain, Sah, and Sawhney in~\cite{JSS} proved
  that, for fixed $0<\lambda\le1/2$ and $\min(d,n-d)\ge\lambda n$,
  \[
   \Prob\bigl[s_n(A)\le\kappa\bigr]
   \le C_\lambda\kappa\sqrt n+2e^{-c_\lambda n}\, ,
   \qquad \kappa\ge0,
  \]
  where $C_\lambda,c_\lambda>0$ depend only on
  $\lambda$. Taking $\kappa=0$ gives
  \[
   \Prob\bigl[\rank(A)<n\bigr]
   \le 2e^{-c_\lambda n}.
  \]
  This confirms the conjecture of Cook in~\cite[Conjecture~1]{Cook} that the singularity probability is
  exponentially small in the dense regime.

  A closely related problem is obtained by placing independent signs
  on the edges. Let
  \[
   A^\pm=(A_{ij}\xi_{ij})_{1\le i,j\le n},
  \]
  where the $\xi_{ij}$ are i.i.d.\ Rademacher random variables
  independent of $A$. In this signed model, H. Liang, Y. Shi, K.~Yu, and
  T.~Yu in~\cite{Signed} proved
  \[
   \Prob\bigl[\rank(A^\pm)\le n-k\bigr]
   \le e^{-c_\lambda nk}\, ,
   \qquad 1\le k\le n,
  \]
  under the same degree regime. Conditional
  on $A$, the nonzero entries of $A^\pm$ remain independent random
  signs. The unsigned matrix $A$ has no corresponding entrywise
  randomness after its support is fixed.

  Theorem~\ref{thm:main} therefore supplies the large corank analogue
  of the Jain--Sah--Sawhney singularity bound for the unsigned
  model.

  \subsection{Overview of the proof}

  Let $M$ have independent $\Ber(d/n)$ entries. Conditioning on
  $M\in\Mnd$ gives the regular model at a cost of
  $e^{O_\lambda(n\log n)}$. The bound of Polavarapu in~\cite{Polavarapu}
  therefore applies when $k$ is at least a sufficiently large multiple
  of $\log n$; the singularity bound treats bounded $k$. For the
  remaining range, we split according to whether $\ker(A)$ contains a
  nonzero vector supported on at most $\delta n$ coordinates.

  \medskip\noindent\emph{Sparse kernel vectors.}
  We first treat the left kernel; the right kernel bound follows by
  transposition. Following the argument of Cook in~\cite{Cook}, we
  condition on all but
  two rows. The column sums fix their sum, while a sparse relation
  forces their difference into a subspace of small dimension. Replacing
  the two rows by their sum changes the rank by at most one. This gives
  the relative bound
  $e^{-cn}\Prob[\corank(A)\ge k-1]$ in
  Lemma~\ref{lem:sparse-contraction}.

  \medskip\noindent\emph{Pairing the rows.}
  Jain, Sah, and Sawhney in~\cite[proof of Lemma~2.18]{JSS} use a
  random row pairing to study a fixed vector. We choose a uniform
  random pairing independently of $A$ and reveal the pair sums. Conditional on this information, the row
  differences are independent. The pair sums and selected differences
  determine a possible row space, and every remaining difference must
  lie in it. For corank $h$, the left kernel gives
  $\lceil h/2\rceil$ disjoint pairs whose inclusion in the random
  pairing leaves at least that many differences not selected for a basis.
  This occurs with probability at least $n^{-\lceil h/2\rceil}$, uniformly
  in $A$; see Proposition~\ref{prop:matching-reduction}.

  \medskip\noindent\emph{Exceptional indices.}
  An exceptional probability of $e^{-cn}$ is too large when $h$ grows.
  We allow a small multiple of $h$ indices with small support or small
  codimension in the space of vectors on that support whose coordinates
  sum to zero. If this codimension is small at too many indices, the
  absence of sparse vectors in the right kernel forces disjoint sets of
  indices whose support sets leave a large complement. Section~\ref{sec:supports}
  bounds the probability of $r$ such sets by $e^{-cnr}$, leaving enough
  differences for the corank bound.

  \medskip\noindent\emph{Relative anticoncentration.}
  The inequality of Hunter, Kwan, Sauermann, and Sawhney in~\cite{HKSS}
  gives a comparison between the probabilities that a Rademacher vector
  and a lazy vector lie in a subspace. We apply it on a
  fixed support and condition both vectors to have sum zero. To pass
  from factors of the form $e^{-ch}$ to $e^{-cnh}$, we use their thin and
  thick subspace argument. For thick spaces, we sample a small multiple
  of $n$ independent auxiliary vectors from a mixture of the comparison
  laws, as in the argument of Polavarapu in~\cite{Polavarapu}. We first choose some
  of them to extend a basis of the pair sums to a basis of their joint
  span, and then extend it with row differences. The remaining
  auxiliary vectors must lie in the smaller
  span. Lemma~\ref{lem:transversality} shows that the pair sums have
  codimension of order $n$ on every large support, except with
  probability $e^{-cn^2}$; this remains true after adding the selected
  auxiliary vectors. Together with the relative inequality, this gives
  the required bound.

  \subsection{Notation}\label{sec:notation}

  Write $[N]=\{1,\ldots,N\}$ and
  $\supp(v)=\{j:v_j\ne0\}$ for $v\in\R^N$.
  A vector is $s$-sparse if $|\supp(v)|\le s$. For $0<\delta<1$, set
  \[
   \mathrm{Sparse}(\delta)
   :=\{v\in\R^n:|\supp(v)|\le\delta n\}.
  \]
  For $T\subseteq[N]$, write $T^c=[N]\setminus T$, let $v|_T$ be the
  restriction of $v$ to $T$, and write $S\mathbin\triangle T$ for the
  symmetric difference of sets. Let $\one$ be the vector of the
  appropriate dimension whose coordinates all equal $1$, and let
  $\one_T$ be the corresponding vector in $\R^T$. We use
  $\langle\cdot,\cdot\rangle$ for the standard inner product and identify
  row and column vectors with the same coordinates. For a matrix $B$, we write $B_i$ for
  the $i$-th row of $B$.

  For an $n\times n$ matrix $B$, set
  $\corank(B)=n-\rank(B)$. We write $\ker(B)$ for its right kernel and
  $\ker(B^{\mathsf T})$ for its left kernel. For a subspace
  $V\subseteq\R^N$, write $V^\perp$ for its orthogonal complement and
  $\codim(V)=N-\dim(V)$. If $W\subseteq V$, write
  $\codim_V(W)=\dim(V)-\dim(W)$. A subspace is rational if it has a
  basis with rational coordinates. We write $\Ber(p)$ and $\Bin(N,p)$
  for Bernoulli and binomial laws.

  Throughout, fix $0<\lambda\le1/2$ and
  $\lambda n\le d\le(1-\lambda)n$. Set
  $\lambda_0=d/n$ and $\theta=\lambda(1-\lambda)$, so that
  \begin{equation}\label{eq:density-range}
   \lambda_0\in[\lambda,1-\lambda],\qquad
   \lambda_0(1-\lambda_0)\ge\theta.
  \end{equation}
  Unless stated otherwise, $A$ is uniform on $\Mnd$. Constants
  $c,C>0$ may change from line to line and depend only on $\lambda$.
  We write $f=O_\alpha(g)$ when $|f|\le C_\alpha g$, and $f\asymp g$
  when $cg\le f\le Cg$ for absolute positive constants. 

  \begingroup
  \subsection{Organization}

  Section~\ref{sec:reduction} reduces Theorem~\ref{thm:main} to a
  relative bound for sparse kernel vectors and a tail bound when the
  right kernel contains no nonzero sparse vector.
  Section~\ref{sec:preliminaries} gives the comparison
  with the independent model and proves the sparse kernel bound.
  Section~\ref{sec:pairing} gives the conditional law of the row
  differences and proves the matching bound. Section~\ref{sec:supports}
  proves the support and codimension bounds. Finally,
  Section~\ref{sec:conditional} uses relative anticoncentration and
  the thin and thick subspace argument to prove
  Proposition~\ref{cor:nonsparse-tail}.
  \endgroup

  \section{Proof of Theorem~\ref{thm:main}}\label{sec:reduction}

  Write $P_k=\Prob[\corank(A)\ge k]$ for $0\le k\le n$, so $P_0=1$.
  We split according to whether the right kernel contains a nonzero
  sparse vector.

  \begin{lemma}\label{lem:sparse-contraction}
  There exist $\delta,c_{\ref{lem:sparse-contraction}}>0$, depending
  only on $\lambda$, such that, for all sufficiently large $n$,
  $1\le k\le n$, and $M\in\{A,A^{\mathsf T}\}$,
  \begin{equation}\label{eq:sparse-contraction}
   \Prob[\corank(M)\ge k,\
         \ker(M)\cap\mathrm{Sparse}(\delta)\ne\{0\}]
   \le e^{-c_{\ref{lem:sparse-contraction}}n}P_{k-1}.
  \end{equation}
  \end{lemma}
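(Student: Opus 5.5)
By transposition it suffices to treat $M=A^{\mathsf T}$. The map $B\mapsto B^{\mathsf T}$ is a bijection of $\Mnd$, so $A^{\mathsf T}$ is again uniform and $P_{k-1}$ is unchanged. The case $M=A^{\mathsf T}$ concerns the left kernel. On the event $E$ in \eqref{eq:sparse-contraction} there is a nonzero $w$ with $w^{\mathsf T}A=0$ and $S=\supp(w)$ satisfying $|S|\le\delta n$. Following Cook, I will find two rows $i,j$ whose difference is forced into a small subspace, and compare with a matrix in which these two rows are replaced by their sum.

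\emph{Choice of the pair.} Take any $i\in S$ and let $j\notin S$, so $w_j=0\ne w_i$. For rows $i,j$ put $s=A_i+A_j$ and $m_{ij}=|\{\ell:s_\ell=1\}|=|\supp(A_i-A_j)|$. Column sums give $\sum_{j'}\langle A_i,A_{j'}\rangle=d^2$. Hence the average overlap of row $i$ with the other rows is about $d^2/n=\lambda_0 d$, and the average of $m_{ij}$ is about $2d(1-\lambda_0)\ge2\theta n$. Since $m_{ij}\le 2n$, Markov-type counting gives at least $c n$ indices $j$ with $m_{ij}\ge\theta n$. Because $|S|\le\delta n$ with $\delta$ small, at least one such $j$ lies outside $S$. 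Thus on $E$ there are $i\in S$ and $j\notin S$ with $m_{ij}\ge\theta n$. Moreover $w_iA_i=-\sum_{l\in S\setminus\{i\}}w_lA_l$, and writing $A_i=\tfrac12\bigl(s+(A_i-A_j)\bigr)$ gives
\[
A_i-A_j\in W:=\mathrm{span}\bigl(\{A_l:l\in S\setminus\{i\}\}\cup\{s\}\bigr),\qquad \dim W\le\delta n+1 .
\]
I will take a union bound over the $n^2$ ordered pairs $(i,j)$ and over the $\binom{n}{\le\delta n}\le e^{H(\delta)n}$ choices of $S$.

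\emph{Conditioning.} Fix $(i,j,S)$ and let $\mathcal F$ be the $\sigma$-algebra generated by all rows $A_l$ with $l\ne i,j$ and by $s$. Given $\mathcal F$, the pair $(A_i,A_j)$ is determined by which half of the $m=m_{ij}$ coordinates with $s_\ell=1$ go to row $i$. Every such choice yields a matrix in $\Mnd$, so the conditional law is uniform over $\binom{m}{m/2}$ configurations. Both $W$ and $m$ are $\mathcal F$-measurable. The vectors $A_i-A_j$ are $\pm1$ on a fixed set of $m$ coordinates and zero elsewhere. A subspace of dimension $D$ contains at most $2^D$ such vectors, by projecting onto $D$ coordinates on which it is a graph. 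Hence
\[
\Prob\bigl[A_i-A_j\in W\mid\mathcal F\bigr]\le 2^{\delta n+1}\Big/\tbinom{m}{m/2}\le e^{-c\theta n}
\qquad\text{on }\{m\ge\theta n\},
\]
for $\delta$ small in terms of $\theta$.

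\emph{The corank comparison.} This is the step I expect to be delicate, and I would handle it as follows. Let $A'$ be the $(n-1)\times n$ matrix with rows $A_l$ for $l\ne i,j$ and the row $s$. Then $A'$ is $\mathcal F$-measurable and its row space lies in that of $A$, so $G:=\{\rank(A')\le n-k\}$ contains $E$. Every matrix $\hat A$ in the $\mathcal F$-fiber satisfies $\rank(\hat A)\le\rank(A')+1$. On $G$ every fiber element therefore has corank $\ge k-1$, and averaging over the uniform fiber gives $\Prob[G]\le P_{k-1}$. Combining,
\[
\Prob[E]\le\sum_{i,j,S}\mathbb E\Bigl[\one_G\,\one_{\{m_{ij}\ge\theta n\}}\,\Prob\bigl[A_i-A_j\in W\mid\mathcal F\bigr]\Bigr]\le n^2e^{H(\delta)n}e^{-c\theta n}P_{k-1}.
\]
Choosing $\delta$ so that $H(\delta)<c\theta/2$ and taking $n$ large yields \eqref{eq:sparse-contraction}. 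The main points to check are that $\mathcal F$ captures exactly the free swap randomness and that the selection condition $m_{ij}\ge\theta n$ is $\mathcal F$-measurable. The first holds because the row-sum constraint forces exactly $m/2$ ones into row $i$; the second holds because $m_{ij}$ is read off from $s$.
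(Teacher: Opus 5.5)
Your proposal is correct and follows essentially the same argument as the paper: you condition on all rows except $A_i,A_j$, so that the pair sum and the set $T$ are fixed and $A_i-A_j$ is a uniform balanced sign vector on $T$; you bound by $2^{\dim W}/\binom{m}{m/2}$ using an $\mathcal F$-measurable span $W$ built from $S\setminus\{i\}$ with $j\notin S$; you compare ranks with the $(n-1)\times n$ matrix $A'$ to get $\Prob[G]\le P_{k-1}$; and you take a union bound over $(S,i,j)$. The remaining differences are cosmetic: you keep $W$ in $\R^n$ with the pair sum $s$ instead of restricting to $T$ with $\one_T$, and you find $j\notin S$ via $\sum_{j'}\langle A_i,A_{j'}\rangle=d^2$, which is equivalent to the paper's symmetric-difference identity $2d(n-d)$.
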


  Fix $\delta$ as in Lemma~\ref{lem:sparse-contraction}. The other case
  is covered by the following proposition.

  \begin{proposition}\label{cor:nonsparse-tail}
  There exist $c_{\ref{cor:nonsparse-tail}}>0$ and an integer
  $k_{\ref{cor:nonsparse-tail}}\ge1$, depending only on $\lambda$,
  such that, for all sufficiently large $n$ and
  $k_{\ref{cor:nonsparse-tail}}\le k\le n$,
  \[
   \Prob[\corank(A)\ge k,\
         \ker(A)\cap\mathrm{Sparse}(\delta)=\{0\}]
   \le e^{-c_{\ref{cor:nonsparse-tail}}nk}.
  \]
  \end{proposition}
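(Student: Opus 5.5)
The plan is to follow the outline in the introduction: randomize over a uniform pairing of the rows, reveal the pair sums, and use the conditional independence of the row differences. First, for $k\ge C_0\log n$ with $C_0$ large, we can bypass everything. Compare with a matrix $M$ having i.i.d.\ $\Ber(\lambda_0)$ entries: $\Prob[M\in\Mnd]\ge e^{-O_\lambda(n\log n)}$, so Polavarapu's bound $\Prob[\corank(M)\ge k]\le e^{-cnk}$ transfers to $A$ after losing $e^{O(n\log n)}$, which is absorbed once $k\ge C_0\log n$. So it suffices to treat $k_0\le k\le C_0\log n$. In this range we only need to beat a factor $n^{O(k)}=e^{O(k\log n)}$, which is negligible against $e^{-cnk}$.

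For this range, fix a uniform random perfect pairing $\pi$ of $[n]$ (assume $n$ even, or leave one row unpaired), independent of $A$. Condition on the pair sums $A_i+A_{\pi(i)}$. Conditional on this information, the differences $D_j=A_i-A_{\pi(i)}$ are independent. Each $D_j$ is $\pm1$ on the set where the pair sum equals $1$ and zero elsewhere. Its law is constrained only by the column sums, which makes each difference a signed vector with sum zero on its support $S_j$; it is a uniform $\pm$ vector on $S_j$ with zero sum. By Proposition~\ref{prop:matching-reduction}, the event $\corank(A)\ge k$ is controlled, at cost $n^{\lceil k/2\rceil}$, by the following event: after selecting differences spanning together with the pair sums a space $W$, at least $\lceil k/2\rceil$ further differences lie in $W$.

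The two inputs are as follows. (i) By Lemma~\ref{lem:transversality}, except with probability $e^{-cn^2}$, the span of the pair sums has codimension $\ge cn$ on every support of size $\ge cn$. (ii) Since $\ker(A)$ has no $\delta n$-sparse vectors, the results of Section~\ref{sec:supports} show the following: apart from $O(\varepsilon k)$ exceptional indices, the supports $S_j$ are large and $W$ restricted to each $S_j$ has codimension $\ge cn$ inside $\{x\in\R^{S_j}:\langle x,\one\rangle=0\}$. Having $r$ disjoint exceptional index sets costs $e^{-cnr}$, which is affordable. We would then discard the exceptional indices, which still leaves $\gtrsim k/2-O(\varepsilon k)\ge k/4$ differences that must land in $W$.

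The main step, and the one I expect to be hardest, is showing each remaining difference lies in a given codimension-$cn$ subspace with probability $e^{-cn}$ rather than merely $e^{-c}$. The plan is to apply the Hunter--Kwan--Sauermann--Sawhney relative inequality on the fixed support $S_j$, conditioned on zero sum, which compares the Rademacher law with a lazy law. Then split into two cases according to the subspace. If the subspace is thin, meaning the lazy probability is at most $e^{-cn}$, we are done directly. If it is thick, we draw $\eta n$ auxiliary vectors from a mixture of the lazy laws as in Polavarapu's argument. We use some of them to extend the basis of the pair sums, then extend with differences, and require the rest to fall in the span. This produces a factor $e^{-cn\cdot\eta n}$ that pays for the auxiliary sampling, and (i) must survive adding the selected auxiliary vectors.

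Multiplying $\ge k/4$ factors of $e^{-cn}$ and the $n^{O(k)}$ combinatorial losses gives $e^{-cnk}$ for $k\ge k_0$. Together with the regime $k\ge C_0\log n$ handled above, this proves the proposition.
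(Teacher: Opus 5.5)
Your architecture is the paper's: Polavarapu plus the $e^{O(n\log n)}$ conditioning cost for $k\gtrsim\log n$, the random pairing and Proposition~\ref{prop:matching-reduction}, exceptional indices paid for by Proposition~\ref{prop:defect-tails}, transversality of the pair sums, the relative inequality on the sum-zero hyperplane, and thin/thick with auxiliary lazy vectors. However, your input (ii) is false, and your quantitative account of the main step rests on it.

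On the relevant event, the space $W$ spanned by the pair sums and the selected differences is the row space $V$ of $A$. It has codimension $h=\corank(A)$ in $\R^n$, which is of order $\log n$ in the range you treat. Hence $\dim(V\cap H_j)\ge\dim H_j-h$, that is, $r_j(V)\le h$ for every $j$, so no support can see codimension of order $n$. What the absence of $\delta n$-sparse kernel vectors actually gives (Lemma~\ref{lem:visibility}) is $r_j(V)\ge h/(2Q)$ outside fewer than $Q(\mathrm D(A,\pi)+1)$ indices. Your proposal is also internally inconsistent here. If (ii) held, the sign-vector counting of Lemma~\ref{lem:sparse-contraction}, or \eqref{eq:balanced-lazy-absolute}, would already give $e^{-cn}$ per remaining difference. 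Every subspace would then be thin, and the relative-inequality machinery would be unnecessary. With the true codimension $\asymp h$, direct anticoncentration gives at best $e^{-ch}$ per difference, i.e.\ $e^{-ch^2}$ in total, far from $e^{-cnh}$.

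The missing idea is where the factor $e^{-cnh}$ comes from. In the thick case, the relative inequality gives each of the $M\asymp\beta n$ auxiliary vectors a gain of only $\gamma^{h/(2Q)}$. The bound is therefore $\gamma^{hM/(2Q)}=e^{-cnh}$, not $e^{-cn\cdot\eta n}$. It is also not a product of $k/4$ per-difference factors $e^{-cn}$; that description fits only the thin case.

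The linear-in-$n$ codimension enters only through $U$. Transversality, preserved after adding $|J|\le M$ auxiliary vectors, gives $\Prob[Y_j\in W_J]\le(1-p)^{\beta n/2}$ for the unselected auxiliary vectors. The paper compares this with $\rho_V$, the average of $\Prob[X_i\in V]$ over the actual differences. That is why thin/thick is defined through $\rho_V$: the lower bound $\gamma^{-hM/(2Q)}\rho_V^M\Prob[V_X=V]$ and the upper bound $2^N\rho_V^{M+s}$ then cancel directly. With your lazy-probability threshold this cancellation is not immediate.

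Your bookkeeping also omits $e^{O(n)}$ losses. One is $2^N$ from Maclaurin's inequality, needed because the $X_i$ have different laws; another is $\binom{m+M}{m-\ell}$ from choosing $I,J$. These are beaten only because the gain is $e^{-cnh}$ with $h\ge k_0$ large, not by $n^{O(k)}$ accounting. Finally, the exceptional set $E(V)$ depends on the random row space, so you cannot simply discard it. The paper partitions the possible $V$ by $E(V)$ (at most $m^{q_*}$ classes) without conditioning on it, so the $X_i$ stay independent.
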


  The bound for large $k$ follows from Lemma~\ref{lem:global-conditioning};
  Section~\ref{sec:nonsparse} treats the remaining range.

  \begin{proof}[Proof of Theorem~\ref{thm:main}]
  Jain, Sah, and Sawhney in~\cite[Theorem~1.1]{JSS} give
  $P_1\le2e^{-c'n}$ for some $c'>0$ depending only on $\lambda$.
  Lemma~\ref{lem:sparse-contraction} and
  Proposition~\ref{cor:nonsparse-tail} give
  \begin{equation}\label{eq:tail-recursion}
   P_k\le e^{-c_{\ref{cor:nonsparse-tail}}nk}
         +e^{-c_{\ref{lem:sparse-contraction}}n}P_{k-1}
  \end{equation}
  for all sufficiently large $n$ and
  $k\ge k_{\ref{cor:nonsparse-tail}}$.
  Choose an integer $K\ge k_{\ref{cor:nonsparse-tail}}$ so that
  \eqref{eq:tail-recursion} applies for $n>K$ and
  $2e^{-c_{\ref{lem:sparse-contraction}}K/2}\le1$. Take
  \[
   c_\lambda=\frac12\min\left\{
     c_{\ref{lem:sparse-contraction}},
     c_{\ref{cor:nonsparse-tail}},\frac{c'}K
   \right\}.
  \]
  For $k\le\min(K,n)$, $P_k\le P_1\le2e^{-c_\lambda nk}$; this also
  covers $n\le K$. For $K<k\le n$, the induction hypothesis and
  \eqref{eq:tail-recursion} give
  \[
   P_k\le e^{-c_\lambda nk}
   \left(e^{-c_{\ref{cor:nonsparse-tail}}nk/2}
         +2e^{-c_{\ref{lem:sparse-contraction}}n/2}\right)
   \le2e^{-c_\lambda nk},
  \]
  since $n>K$. This completes the induction.
  \end{proof}

  \section{Conditioning and sparse kernel vectors}\label{sec:preliminaries}

  \subsection{Conditioning on regularity}

  Let $\widetilde A$ have independent $\Ber(\lambda_0)$ entries.
  Conditioning on $\widetilde A\in\Mnd$ gives the regular model. We
  use this fact for large $k$ and a local comparison for events
  determined by few rows.

  \begin{lemma}\label{lem:global-conditioning}
  There exist
  $C_{\ref{lem:global-conditioning}},
   c_{\ref{lem:global-conditioning}},
   k_{\ref{lem:global-conditioning}}>0$,
  depending only on $\lambda$, such that, for all sufficiently large $n$,
  \[
   \Prob[\widetilde A\in\Mnd]
   \ge e^{-C_{\ref{lem:global-conditioning}}n\log n}.
  \]
  Conditional on this event, $\widetilde A$ is uniform on $\Mnd$.
  Moreover, for $k_{\ref{lem:global-conditioning}}\log n\le k\le n$,
  \begin{equation}\label{eq:large-corank-conditioning}
   \Prob[\corank(A)\ge k]
   \le e^{-c_{\ref{lem:global-conditioning}}nk}.
  \end{equation}
  \end{lemma}

  \begin{proof}
  Let $Z$ be the matrix of Liang, Shi, Yu, and Yu
  in~\cite[Lemma~4.2]{Signed}. Since the matrix $(|Z_{ij}|)_{i,j=1}^n$ has the law of
  $\widetilde A$, that lemma gives the first bound. Every matrix in
  $\Mnd$ has $nd$ ones and hence the same probability under
  $\widetilde A$, proving the assertion about the conditional law.
  As $\max\{\lambda_0,1-\lambda_0\}\le1-\lambda$, the result of
  Polavarapu in~\cite[Theorem~1.1]{Polavarapu} gives
  $\Prob[\corank(\widetilde A)\ge k]\le e^{-c_0nk}$, where $c_0>0$
  depends only on $\lambda$. Thus
  \[
   \Prob[\corank(A)\ge k]
   \le e^{-c_0nk+C_{\ref{lem:global-conditioning}}n\log n}.
  \]
  Increasing $k_{\ref{lem:global-conditioning}}$ and taking
  $c_{\ref{lem:global-conditioning}}\le c_0/2$ proves the result.
  \end{proof}

  \begin{proposition}\label{prop:local-comparison}
  Fix $L>0$. There exists $C_{\ref{prop:local-comparison}}>0$,
  depending only on $\lambda,L$, such that, for all sufficiently large
  $n$, every integer $1\le q\le L\log n$, and every event $\mathcal E$
  determined by $q$ specified rows,
  \[
   \Prob[A\in\mathcal E]
   \le e^{C_{\ref{prop:local-comparison}}(q^2+q\log n)}
         \Prob[\widetilde A\in\mathcal E].
  \]
  \end{proposition}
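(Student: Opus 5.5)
We compare the law of the $q$ specified rows under the two models. By row exchangeability we may assume the rows are $1,\dots,q$. Let $R$ be a $q\times n$ $\{0,1\}$ matrix with all row sums equal to $d$; otherwise $\Prob[A\in\mathcal E]$ gets no contribution from $R$. Write $s=s(R)\in\{0,\dots,q\}^n$ for its column sums. The plan is to prove the pointwise bound
\[
 \Prob[(A_1,\dots,A_q)=R]\le e^{C(q^2+q\log n)}\Prob[(\widetilde A_1,\dots,\widetilde A_q)=R]
\]
and then sum over $R\in\mathcal E$.

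\textbf{Rewriting as a ratio.} Since $\widetilde A$ has i.i.d.\ entries, $\Prob[\widetilde A_{[q]}=R]=(\lambda_0^d(1-\lambda_0)^{n-d})^q$. For $A$, the probability equals $N_{n-q}(d-s)/|\Mnd|$. Here $N_{m}(\cdot)$ counts $m\times n$ $\{0,1\}$ matrices with row sums $d$ and column sums $d-s_j$. The needed estimate is therefore
\[
 \frac{N_{n-q}(d-s)}{|\Mnd|}\le e^{C(q^2+q\log n)}\bigl(\lambda_0^d(1-\lambda_0)^{n-d}\bigr)^q .
\]
We peel one row at a time, so it suffices to show that for each $1\le t\le q$ and each admissible partial column-sum vector $s'$ from the first $t-1$ rows,
\[
 \Prob\bigl[A_t=r\mid A_{[t-1]}\bigr]\le e^{C(t+\log n)}\lambda_0^d(1-\lambda_0)^{n-d}.
\]
Multiplying over $t\le q$ then gives the exponent $C(q^2+q\log n)$.

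\textbf{Key step: a one-row estimate.} Conditional on the first $t-1$ rows, the remaining $(n-t+1)\times n$ block is uniform among $\{0,1\}$ matrices with row sums $d$ and column sums $c_j=d-s'_j\in[d-t,d]$. The ratio we need is
\[
 \frac{N_{n-t}(c-r)}{N_{n-t+1}(c)},
\]
which should be compared with $\lambda_0^d(1-\lambda_0)^{n-d}$. We would use the asymptotic enumeration of dense binary matrices with prescribed margins (Canfield--Greenhill--McKay, valid for $d,n-d\ge\lambda n$ and margins within $O(n^{1/2+\epsilon})$ of the mean). Here $t\le L\log n$, so the margins deviate by $O(\log n)$, well inside that range. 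The formula expresses $N$ as a binomial-product main term times $\exp(O(1))$ correction terms, with explicit dependence on the margin deviations.

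\textbf{Main obstacle.} The difficulty is checking that the main-term ratio equals $\lambda_0^d(1-\lambda_0)^{n-d}$ up to a factor $e^{O(t+\log n)}$, uniformly in $r$ and $c$. The binomial ratio contributes about $\prod_j (c_j/(n-t+1))^{r_j}(1-c_j/(n-t+1))^{1-r_j}$ together with a $\binom{n}{d}$-type normalisation. Each factor deviates from $\lambda_0$ or $1-\lambda_0$ by relative error $O(t/n)$, which over $n$ coordinates gives $e^{O(t)}$. The normalisation gives $e^{O(\log n)}$. The exponential correction terms in the enumeration depend on second moments of the deviations, which are $O(t^2/n)$ per row and hence harmless.

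\textbf{Fallback if the enumeration is unavailable.} We would instead use a switching argument, comparing $N_{n-t}(c-r)$ with $N_{n-t}(c-r')$ for vectors $r,r'$ differing by a swap of one $1$ and one $0$. Each such swap changes the count by a factor $(c_j-\cdot)/(c_{j'}-\cdot)\cdot(1+O(1/n))$, which would yield the same $e^{O(t)}$ per-row bound after averaging over all $\binom nd$ rows.
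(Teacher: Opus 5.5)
Your proof is correct, but it takes a genuinely different route from the paper. The paper gives no argument of its own: it invokes Lemma~2.1 of Liang, Shi, Yu, and Yu~\cite{Signed} (with $m=q$, $K=L$) as a black box. You instead derive the comparison directly from the Canfield--Greenhill--McKay enumeration, and the computation you sketch does close:
\begin{itemize}
\item With all row sums equal to $d$, the row-deviation term vanishes.
\item The column sums deviate from their mean by at most $t$, so the quadratic correction changes by only $e^{O(t^2/n)}$ between the two counts.
\item The column factors $\binom{n-t}{c_k-r_k}/\binom{n-t+1}{c_k}$ are exactly $p_k^{r_k}(1-p_k)^{1-r_k}$ with $p_k=c_k/(n-t+1)=\lambda_0+O(t/n)$.
\item The remaining binomials give $\binom{(n-t+1)n}{(n-t+1)d}\big/\bigl[\binom{(n-t)n}{(n-t)d}\binom nd\bigr]\asymp\sqrt n$.
\end{itemize}
Hence each conditional row probability equals $\sqrt n\,e^{O(t)}\lambda_0^d(1-\lambda_0)^{n-d}$. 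This is a two-sided estimate, and it also explains the shape of the exponent. The term $q\log n$ is the local-limit factor $\binom nd\lambda_0^d(1-\lambda_0)^{n-d}\asymp n^{-1/2}$, paid once per row. The term $q^2$ comes from the $O(t/n)$ drift of the column densities after $t-1$ rows are revealed.

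The citation buys brevity. Your route is self-contained modulo Canfield--Greenhill--McKay, and it shows that the restriction $q\le L\log n$ is not essential for this step: margin deviations of order $\sqrt n$ are still within their range.

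Two small points. First, state explicitly that the $O(1)$ error in the enumeration formula is uniform over $d\in[\lambda n,(1-\lambda)n]$, $t\le L\log n$, and the admissible margins (a routine subsequence argument). Second, do not rely on the switching fallback as written. A single switching determines the count ratio for one swap only in terms of the unknown average codegree of the two columns involved. So it does not give the $1+O(1/n)$ precision per swap that you would need to survive the order-$n$ swaps separating a given $r$ from a typical row. Controlling those codegrees to that accuracy is essentially as hard as the enumeration itself.
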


  \begin{proof}
  This follows from Lemma~2.1 of Liang, Shi, Yu, and Yu
  in~\cite{Signed}, with $m=q$ and $K=L$.
  \end{proof}

  \subsection{Sparse vectors in the kernel}\label{sec:sparse}

  We first prove the bound in Lemma~\ref{lem:sparse-contraction}
  for the left kernel by conditioning on all but two rows, following
  the argument of Cook in~\cite[Section~3.1]{Cook}. The right kernel
  bound follows by transposition.

  \begin{proof}[Proof of Lemma~\ref{lem:sparse-contraction}]
  Fix $i\in[n]$. Counting each column according to its entry in row
  $i$ gives
  \begin{equation}\label{eq:symmetric-difference-sum}
   \sum_{j=1}^n
   |\supp(A_i)\mathbin\triangle\supp(A_j)|
   =2d(n-d)\ge2\theta n^2.
  \end{equation}
  Each summand is at most $n$, so at least $\theta n$ indices $j$
  satisfy $|\supp(A_i)\mathbin\triangle\supp(A_j)|\ge\theta n$;
  otherwise the sum in~\eqref{eq:symmetric-difference-sum} is less
  than $2\theta n^2$.

  Fix $S\subseteq[n]$, $i\in S$, and $j\notin S$. Reveal all rows
  except $A_i,A_j$, and let $\mathcal F$ be the resulting
  sigma-algebra. The column sums determine $A_i+A_j$, and hence
  $T=\supp(A_i)\mathbin\triangle\supp(A_j)$ and $N=|T|$. Set
  $D=A_i-A_j$. Conditional on $\mathcal F$, $D|_T$ is uniform on
  \[
   \{x\in\{\pm1\}^T:\langle x,\one_T\rangle=0\}.
  \]
  Indeed, the two rows are equal on $T^c$ and have equal row sums, so
  each has $N/2$ ones on $T$. Each choice of these ones for $A_i$
  determines one matrix, and all such matrices have the same
  conditional probability.

  If $u^{\mathsf T}A=0$, $\supp(u)\subseteq S$, and $u_i\ne0$, then
  on $T$,
  \[
   D|_T=-\one_T-\frac2{u_i}
         \sum_{\ell\in S\setminus\{i\}}u_\ell A_\ell|_T
   \in W:=\operatorname{span}\bigl(\one_T,
         \{A_\ell|_T:\ell\in S\setminus\{i\}\}\bigr).
  \]
  Since $j\notin S$, the space $W$ is $\mathcal F$-measurable and
  $\dim W\le|S|$; it does not depend on $u$.
  An $r$-dimensional subspace of $\R^N$ contains at most $2^r$ sign
  vectors, since projection onto some $r$ coordinates is injective.
  Also $\binom N{N/2}\ge2^N/(N+1)$. Therefore, for $N\ge\theta n$,
  \[
   \Prob[D|_T\in W\mid\mathcal F]
   \le(N+1)2^{|S|-N}.
  \]
  Choose $0<\delta<\theta/4$ and let $c_0=\theta\log2/2$.
  For $|S|\le\delta n$ and all sufficiently large $n$, this is
  at most $e^{-c_0n}$.

  Let $A'$ be the $(n-1)\times n$ matrix obtained by replacing
  $A_i,A_j$ with their sum. It is
  $\mathcal F$-measurable and
  $\rank(A')\le\rank(A)\le\rank(A')+1$. Hence
  $\corank(A)\ge k$ implies $\rank(A')\le n-k$, while
  $\rank(A')\le n-k$ implies $\corank(A)\ge k-1$ for every matrix
  with the revealed rows. The sparse relation also gives $D|_T\in W$.
  Averaging the conditional bound over $\mathcal F$ gives, for each
  $S,i,j$,
  \[
  \begin{aligned}
   &\Prob\bigl[\corank(A)\ge k,\ N\ge\theta n,\
         \exists u:\,u^{\mathsf T}A=0,\
         \supp(u)\subseteq S,\ u_i\ne0\bigr]\\
   &\qquad\le e^{-c_0n}\Prob[\rank(A')\le n-k]
   \le e^{-c_0n}P_{k-1}.
  \end{aligned}
  \]

  Take $\delta$ smaller if necessary so that
  $-\delta\log\delta-(1-\delta)\log(1-\delta)<c_0/2$.
  The binomial sum bound gives
  $\sum_{s\le\delta n}\binom ns\le e^{c_0n/2}$ for all sufficiently
  large $n$. If $\ker(A^{\mathsf T})$ contains a nonzero vector in
  $\mathrm{Sparse}(\delta)$, take $S$ to be its support and $i\in S$.
  By~\eqref{eq:symmetric-difference-sum}, some $j\notin S$ has
  $N\ge\theta n$. A union bound over at most
  $n^2\sum_{s\le\delta n}\binom ns$ choices of $S,i,j$ gives
  \[
  \begin{aligned}
   &\Prob[\corank(A)\ge k,\
         \ker(A^{\mathsf T})\cap\mathrm{Sparse}(\delta)\ne\{0\}]\\
   &\qquad\le n^2e^{-c_0n/2}P_{k-1}
   \le e^{-c_0n/4}P_{k-1}.
  \end{aligned}
  \]
  Since $A$ and $A^{\mathsf T}$ have the same distribution, the bound
  also holds for the right kernel with
  $c_{\ref{lem:sparse-contraction}}=c_0/4$.
  \end{proof}

  \section{Pairing the rows}
  \label{sec:pairing}

  Let $A$ be uniform on $\Mnd$, and let $m:=\lfloor n/2\rfloor$.
  Independently of $A$, choose a uniform matching $\pi$ of $[n]$
  with $m$ pairs. Write its pairs as $\{a_i,b_i\}$, $i\in[m]$,
  where $a_i<b_i$. As in the proof of Lemma~2.18 by Jain,
  Sah, and Sawhney in~\cite{JSS}, we reveal the pair sums.
  Set
  \begin{equation}\label{eq:pair-sum-difference}
   S_i:=A_{a_i}+A_{b_i},\qquad
   D_i:=A_{a_i}-A_{b_i},\qquad
   T_i:=\{j\in[n]:S_i(j)=1\}.
  \end{equation}
  Thus $T_i=\supp(D_i)$. Let $U$ be the span of the pair sums
  and the remaining row, if $n$ is odd. This row is determined by
  the pair sums and the column sums, since
  $A_c=d\one^{\mathsf T}-\sum_{i=1}^m S_i$.
  In particular, $\dim U\le n-m$.

  Let $\mathcal G_\pi$ be generated by $\pi,S_1,\ldots,S_m$.
  Conditional on $\mathcal G_\pi$, the space $U$ and the sets $T_i$
  are fixed. We use the following conditional law.

  \begin{lemma}\label{lem:pair-product-law}
  Conditional on $\mathcal G_\pi$, the vectors $D_1,\ldots,D_m$
  are independent. Each $D_i$ is zero outside $T_i$, and
  $D_i|_{T_i}$ is uniform among the $\{\pm1\}$-vectors on $T_i$
  whose coordinates sum to zero.
  \end{lemma}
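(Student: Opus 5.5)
The plan is to compute the conditional law by direct counting, as in the two-row argument in the proof of Lemma~\ref{lem:sparse-contraction}. Since $\pi$ is independent of $A$, we may condition on $\pi$ and treat it as fixed. Then $\mathcal G_\pi$ is generated by the finitely-valued vector $(S_1,\ldots,S_m)$. It therefore suffices to fix an admissible value $(s_1,\ldots,s_m)$, meaning one of positive probability, and describe the uniform measure on the fiber
\[
 \mathcal M(s):=\{B\in\Mnd: B_{a_i}+B_{b_i}=s_i\ \text{for all } i\in[m]\}.
\]
Conditioning $A$ on $\{S=s\}$ gives the uniform law on $\mathcal M(s)$, because $A$ is uniform on $\Mnd$.

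The first step is to parametrize $\mathcal M(s)$. Each $s_i$ has entries in $\{0,1,2\}$. Where $s_i(j)\in\{0,2\}$, both rows are forced, so $B_{a_i}(j)=B_{b_i}(j)=s_i(j)/2$ and $D_i(j)=0$. On $T_i$, exactly one of the two rows has a $1$, so $B$ is determined by the choice of $R_i:=\supp(B_{a_i})\cap T_i$. The leftover row $A_c$ (when $n$ is odd) equals $d\one^{\mathsf T}-\sum_i s_i$, so it is also fixed.

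The second step is to identify the constraints on the $R_i$. The column sums are automatically correct. For each $j$, the column sum equals $\sum_i s_i(j)+A_c(j)$, and this is $d$ because $s$ is admissible, i.e.\ it comes from some matrix in $\Mnd$. The row sums require $|\supp(B_{a_i})|=d$ and $|\supp(B_{b_i})|=d$. Both rows share the forced part $F_i:=\{j:s_i(j)=2\}$. So this says $|F_i|+|R_i|=d=|F_i|+|T_i|-|R_i|$, which is equivalent to $|R_i|=|T_i|/2$; in particular $|T_i|$ is even. The $0/1$ condition holds automatically. Hence $\mathcal M(s)$ is in bijection with the product $\prod_{i=1}^m\{R_i\subseteq T_i:|R_i|=|T_i|/2\}$.

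Finally, under this bijection $D_i|_{T_i}=\one_{R_i}-\one_{T_i\setminus R_i}$. The uniform law on a product set is the product of uniform laws, so the $D_i$ are conditionally independent. Each $D_i|_{T_i}$ is uniform on the $\pm1$ vectors on $T_i$ with zero sum, and $D_i$ vanishes off $T_i$.

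The only point needing care is the check that no hidden constraint couples different pairs. This reduces to the observation that the column sums depend on $B$ only through the $s_i$ and the fixed leftover row. There is no substantive obstacle.
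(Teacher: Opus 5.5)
Your proposal is correct and takes the same approach as the paper. For fixed $\pi$ and fixed pair sums, the fiber of $\Mnd$ is in bijection with the product over $i$ of the balanced sign vectors on $T_i$, and the uniform law of $A$ becomes the product law. You write out the checks the paper leaves implicit: the forced entries, the fixed leftover row, and the row-sum condition forcing $|R_i|=|T_i|/2$.
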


  \begin{proof}
  For fixed $\pi$ and pair sums,
  the choices of the differences form a product of the sets of sign
  vectors in the lemma. Each choice gives one matrix in $\Mnd$, so the
  uniform law of $A$ gives the conditional product law.
  \end{proof}

  Each row pair has the same span as its sum and difference. Hence
  \begin{equation}\label{eq:final-rowspace}
   V:=U+\operatorname{span}(D_1,\ldots,D_m)
     =\operatorname{span}(A_1,\ldots,A_n).
  \end{equation}
  For an integer $0\le h\le n$, let
  \begin{equation}\label{eq:quotient-deficit}
   \ell:=m-(n-h-\dim U).
  \end{equation}
  On $\{\corank(A)=h\}$, the differences span $V/U$, which has
  dimension $m-\ell$. Thus $m-\ell$ differences can be selected
  to extend a basis of $U$ to one of $V$, and each of the remaining
  $\ell$ differences lies in this span. Since $\dim U\le n-m$,
  $0\le\ell\le h$ on this event. We next show that an independent
  random pairing gives $\ell\ge\lceil h/2\rceil$ with sufficiently
  large probability.

  \begin{proposition}\label{prop:matching-reduction}
  Fix $A\in\Mnd$ with corank $h\ge1$. For a uniform maximum matching
  $\pi$ of $[n]$,
  \begin{equation}\label{eq:matching-reduction}
   \Prob_\pi\!\left[\ell\ge\left\lceil\frac h2\right\rceil\right]
   \ge n^{-\lceil h/2\rceil}.
  \end{equation}
  \end{proposition}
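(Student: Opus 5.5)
The plan is to turn $\ell\ge\lceil h/2\rceil$ into a condition on $\pi$ alone: it should hold as soon as $\pi$ contains $\lceil h/2\rceil$ specific disjoint pairs determined by the left kernel. We then bound the probability of that containment.

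\emph{Step 1: rewrite $\ell$.} Write $K=\ker(A^{\mathsf T})$, so $\dim K=h$. The space $U$ is spanned by $n-m$ generators: the $m$ pair sums and, if $n$ is odd, the row $A_c$ of the unpaired index $c$. A linear relation $\sum_i w_iS_i+w_cA_c=0$ is the same as a left kernel vector $u$ with $u_{a_i}=u_{b_i}=w_i$ for every $i$ and $u_c=w_c$. Hence
\[
 n-m-\dim U=\dim K_\pi,\qquad K_\pi:=\{u\in K:\ u_{a_i}=u_{b_i}\ \text{for all } i\in[m]\}.
\]
Substituting into \eqref{eq:quotient-deficit} gives $\ell=h-\dim K_\pi$. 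So it suffices to show $\dim K_\pi\le\lfloor h/2\rfloor$ with the required probability.

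\emph{Step 2: choose the pairs deterministically from $A$.} Each pair $\{a,b\}$ imposes the functional $\varphi_{ab}=e_a-e_b$ restricted to $K$. It is enough to find $t=\lceil h/2\rceil$ disjoint pairs whose functionals are linearly independent on $K$. If all of them lie in $\pi$, then $K_\pi$ is cut out by at least $t$ independent constraints, so $\dim K_\pi\le h-t=\lfloor h/2\rfloor$.

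To build the pairs, pick a pivot set $I=\{i_1,\dots,i_h\}$ such that $u\mapsto u|_I$ is injective on $K$. Then the coordinate functionals $e_{i_1},\dots,e_{i_h}$ are independent on $K$. The pairs $\{i_1,i_2\},\{i_3,i_4\},\dots$ give $\lfloor h/2\rfloor$ disjoint pairs with independent functionals.

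\emph{Step 3: the odd case, the main obstacle.} When $h$ is odd we need one more pair $\{i_h,j\}$ with $j\notin\{i_1,\dots,i_{h-1}\}$ that keeps the family independent. Let $E$ be the span on $K$ of the $(h-1)/2$ functionals already chosen. Suppose no admissible $j$ works. Then $e_{i_h}-e_j\in E$ for all $j\notin\{i_1,\dots,i_{h-1}\}$. Also $e_{i_{2r-1}}-e_{i_{2r}}\in E$ by construction. It follows that all differences $e_a-e_b$ lie in $E+\operatorname{span}(e_{i_1},\dots,e_{i_{h-1}})$ restricted to $K$; more carefully, one should check that they all lie in a space of dimension at most $h-1$.

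This is impossible. The constant vector $\one$ is not in $K$, because $\one^{\mathsf T}A=d\one^{\mathsf T}\ne0$. So the sum-zero functionals $\{e_a-e_b\}$, whose common kernel is $\operatorname{span}(\one)$, restrict to a family of full rank $h$ on $K$. The same fact handles $h=1$: some pair $\{a,b\}$ has $u_a\ne u_b$ for the kernel vector $u$. I expect this independence bookkeeping to be the delicate step. If the direct count is awkward, I would fall back on an exchange argument: extend the independent set $\{e_{i_1}-e_{i_2},\dots\}$ using a sum-zero functional, and rewrite it as a combination of $e_{i_h}-e_j$ terms.

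\emph{Step 4: probability.} For fixed disjoint pairs $p_1,\dots,p_t$, reveal the uniform maximum matching pair by pair. Given $p_1,\dots,p_{r-1}\in\pi$, the conditional probability that $p_r\in\pi$ is the chance that a specified vertex is matched to a specified partner among the remaining vertices. This is at least $1/n$; in the odd case one must also account for the chance that the vertex is left unmatched, which still leaves probability at least $1/n$. Multiplying gives $\Prob_\pi[p_1,\dots,p_t\in\pi]\ge n^{-t}$, which is \eqref{eq:matching-reduction}.
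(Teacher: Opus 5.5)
Your argument is correct in substance, and Steps 1 and 4 are the paper's. The paper identifies $\ell$ with the rank on $L=\ker(A^{\mathsf T})$ of the functionals $y\mapsto y_{a_i}-y_{b_i}$. It does this by mapping $\{y:y_{a_i}=y_{b_i}\text{ for all }i\}$ onto $U$ via $y\mapsto y^{\mathsf T}A$, which is your relation count. It then bounds the probability of containing fixed disjoint pairs by the same sequential product $\prod_j(n-2j)^{-1}$.

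Where you differ is in producing the pairs. The paper takes a matching $F$ of maximal size whose difference functionals are independent on $L$. If $2|F|<h$, some nonzero $x\in L$ vanishes on all vertices of $F$. Because $L\subseteq\one^\perp$ (from the row sums, $A\one=d\one$), $x$ has coordinates $x_u>0>x_v$ off $F$, so $\{u,v\}$ can be added. This gives $|F|\ge\lceil h/2\rceil$ for every $h$ at once. You instead get $\lfloor h/2\rfloor$ pairs for free from a pivot set and argue separately for the last pair when $h$ is odd. Your input is only $\one\notin\ker(A^{\mathsf T})$ (from the column sums), which is slightly weaker, paid for with a parity split and extra bookkeeping.

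That bookkeeping needs one correction. Restricted to $K$, the space $E+\operatorname{span}(e_{i_1},\dots,e_{i_{h-1}})$ is just $\operatorname{span}(e_{i_1},\dots,e_{i_{h-1}})|_K$. It does not contain $(e_{i_1}-e_{i_h})|_K$, since $e_{i_1},\dots,e_{i_h}$ are independent on $K$, so the containment you state is false.

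Use instead $\Sigma:=\operatorname{span}\{(e_a-e_{i_h})|_K:a\in P\}$ with $P=\{i_1,\dots,i_{h-1}\}$. It contains $E$, because $e_{i_{2r-1}}-e_{i_{2r}}=(e_{i_{2r-1}}-e_{i_h})-(e_{i_{2r}}-e_{i_h})$. Under your contradiction hypothesis it also contains $(e_j-e_{i_h})|_K$ for every $j\notin P$, and therefore every $(e_a-e_b)|_K$. Since $\dim\Sigma\le h-1$, this contradicts the rank $h$ that you correctly derive from $\one\notin K$. So an admissible $j$ exists, the proof closes, and the fallback exchange argument is not needed.
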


  \begin{proof}
  Let $L:=\ker(A^{\mathsf T})$. Since $A\one=d\one$ and $d>0$,
  $L\subseteq\one^\perp$ and $\dim L=h$. For a matching $F$,
  let $\Phi_F:L\to\R^{|F|}$ have coordinates $y_a-y_b$ for
  $\{a,b\}\in F$, with $a<b$.

  Choose $F$ with as many pairs as possible, subject to
  $\rank\Phi_F=|F|$. If $2|F|<h$, there is a nonzero $x\in L$
  that is zero on every vertex in $F$. Since $\sum_jx_j=0$, there are
  vertices $u,v$ outside $F$ with $x_u>0>x_v$. The map $y\mapsto y_u-y_v$
  does not belong to the span of the coordinates of $\Phi_F$,
  as its value at $x$ is nonzero. Adding $\{u,v\}$ contradicts the
  choice of $F$. Taking a subset if necessary, we may therefore
  assume $|F|=\lceil h/2\rceil$.

  For a maximum matching $\pi$, let
  $K_\pi:=\{y\in\R^n:y_{a_i}=y_{b_i}\text{ for all }i\in[m]\}$.
  Its dimension is $n-m$. The map $y\mapsto y^{\mathsf T}A$ from
  $K_\pi$ has image $U$: the common coordinates on a pair give
  the coefficient of $S_i$, and the remaining coordinate, if any,
  gives the coefficient of the remaining row. Its kernel is
  $L\cap K_\pi=\ker\Phi_\pi$. Hence
  \[
   \rank\Phi_\pi
   =h-(n-m-\dim U)=\ell.
  \]
  If $F\subseteq\pi$, then the coordinates of $\Phi_F$ are among
  those of $\Phi_\pi$, and $\ell\ge|F|$.

  For a uniform maximum matching on $N\ge2$ vertices, a given
  pair occurs with probability
  $\lfloor N/2\rfloor/\binom N2\ge1/N$.
  Conditional on containing prescribed disjoint pairs, the matching
  on the remaining vertices is uniform. Thus
  \[
   \Prob_\pi[F\subseteq\pi]
   \ge\prod_{j=0}^{|F|-1}(n-2j)^{-1}
   \ge n^{-|F|},
  \]
  which proves the proposition.
  \end{proof}

  Let $\mathcal E_h$ depend only on $A$ and imply
  $\corank(A)=h\ge1$. The matching remains uniform conditional
  on $A$, so Proposition~\ref{prop:matching-reduction}, averaged
  over $A$, gives
  \begin{equation}\label{eq:capture-inequality}
   \Prob_A[\mathcal E_h]
   \le n^{\lceil h/2\rceil}
   \Prob_{A,\pi}[\mathcal E_h,\ \ell\ge\lceil h/2\rceil].
  \end{equation}
  Here $\ell$ is fixed after conditioning on $\mathcal G_\pi$.
  On realizations with $\ell>m$, the event $\mathcal E_h$ is empty.
  We do not condition on $\mathcal E_h$ or the corank. For each
  possible set of $m-\ell$ selected differences, reveal only
  those differences. The others still have the independent laws in
  Lemma~\ref{lem:pair-product-law}. On $\mathcal E_h$, some such
  set extends a basis of $U$ to a basis of $V$, so we take a union
  bound over these sets and require every remaining difference
  to lie in the resulting span.

  \section{Support sets and codimension}
  \label{sec:supports}

  We use the notation of Section~\ref{sec:pairing}. The sets
  $T_i$ play the role of the switching sets in the work of
  Jain, Sah, and Sawhney in~\cite[Definition~2.16 (P1)]{JSS}.
  We count disjoint groups of indices for which the union of the
  corresponding sets $T_i$ has a large complement, following the
  argument of Liang, Shi, Yu, and Yu in~\cite[Lemma~2.5]{Signed}.
  Both counts below depend only on the pair sums.

  \subsection{The support sets}

  Recall $m=\lfloor n/2\rfloor$ and $\theta=\lambda(1-\lambda)$.
  Set $\eta:=\delta/2$, where $\delta$ is from
  Lemma~\ref{lem:sparse-contraction}, and let
  \begin{equation}\label{eq:small-support-count}
   N_\theta:=|\{i\in[m]:|T_i|<\theta n\}|.
  \end{equation}
  For a positive integer $Q$, let $\mathrm D(A,\pi)$ be the largest
  number of disjoint sets $G\subseteq[m]$, each of size
  $Q$, for which
  \begin{equation}\label{eq:defective-set}
   \left|[n]\setminus\bigcup_{i\in G}T_i\right|>\eta n.
  \end{equation}
  The sets of indices are disjoint; the sets $T_i$ need not be.
  This is Definition~2.4 of Liang, Shi, Yu, and Yu
  in~\cite{Signed}, with $T_i$ in place of their support sets.

  \begin{proposition}\label{prop:defect-tails}
  There are an integer $Q\ge1$ and $c_{\ref{prop:defect-tails}}>0$,
  depending only on $\lambda$, such that, for all sufficiently
  large $n$ and $1\le r\le k_{\ref{lem:global-conditioning}}\log n$,
  \[
   \Prob_{A,\pi}[N_\theta\ge r]
   +\Prob_{A,\pi}[\mathrm D(A,\pi)\ge r]
   \le2e^{-c_{\ref{prop:defect-tails}}nr}.
  \]
  \end{proposition}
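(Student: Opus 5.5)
The plan is to bound each term separately: a union bound over index sets, followed by the local comparison of Proposition~\ref{prop:local-comparison} to pass to independent rows, where the events become products of independent exponentially small probabilities. Since $r\le k_{\ref{lem:global-conditioning}}\log n$, every event considered involves at most $q\le 2Qr=O(\log n)$ rows. The comparison then costs a factor $e^{C(q^2+q\log n)}=e^{O(r\log^2 n)}$, which is negligible against $e^{-cnr}$.

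\emph{The term $N_\theta$.} On $\{N_\theta\ge r\}$ there are $r$ indices $i$ with $|T_i|<\theta n$. First condition on $\pi$, which is independent of $A$. The union bound is over the $\binom mr\le n^r$ choices of these indices, each of which specifies $2r$ rows $A_{a_i},A_{b_i}$. For independent $\Ber(\lambda_0)$ rows, $|T_i|$ counts the coordinates where the two rows differ, so $|T_i|\sim\Bin(n,2\lambda_0(1-\lambda_0))$ with mean at least $2\theta n$. By Chernoff, $\Prob[|T_i|<\theta n]\le e^{-cn}$, and the $r$ disjoint pairs are independent, giving $e^{-cnr}$. Proposition~\ref{prop:local-comparison} with $q=2r$ then gives
\[
n^re^{C(r^2+r\log n)}e^{-cnr}\le e^{-cnr/2}.
\]

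\emph{The term $\mathrm D$.} On $\{\mathrm D\ge r\}$ there are disjoint $G_1,\dots,G_r\subseteq[m]$ of size $Q$, and for each $G_s$ a set $R_s\subseteq[n]$ with $|R_s|=\lceil\eta n\rceil$ that avoids $\bigcup_{i\in G_s}T_i$. That is, on $R_s$ the two rows of each pair in $G_s$ agree. The union bound is over $n^{Qr}$ choices of the $G_s$ and $\binom n{\eta n}^r\le e^{H(\eta)nr}$ choices of the $R_s$. In the independent model, for a fixed pair and fixed $R_s$, the rows agree on all of $R_s$ with probability
\[
(1-2\lambda_0(1-\lambda_0))^{\eta n}\le(1-2\theta)^{\eta n}=e^{-c_1\eta n}.
\]
The $Q$ pairs of $G_s$ are independent, and so are the distinct groups. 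The total is therefore at most
\[
n^{Qr}e^{H(\eta)nr}e^{-Qc_1\eta nr}e^{C(Q^2r^2+Qr\log n)}.
\]

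\emph{Choice of $Q$.} This is the main point of the argument. The entropy of the $R_s$ costs $e^{H(\eta)n}$ per group, with $\eta=\delta/2$ fixed by Lemma~\ref{lem:sparse-contraction}, while each group gains $e^{-Qc_1\eta n}$. So I take $Q$ large, depending only on $\lambda$ through $\eta$ and $\theta$, so that $Qc_1\eta\ge 2H(\eta)+2$. The remaining factors are $e^{O(r\log^2 n)}$, which is absorbed for large $n$. Adding the two terms gives the claimed bound $2e^{-c_{\ref{prop:defect-tails}}nr}$.

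The only subtle step is applying Proposition~\ref{prop:local-comparison}. I would condition on $\pi$ first, so that the $2Qr$ rows in question are specified before the comparison is made. One should also check that $q=2Qr\le L\log n$ with $L=2Qk_{\ref{lem:global-conditioning}}$, which is exactly why $r$ is restricted to $r\le k_{\ref{lem:global-conditioning}}\log n$.
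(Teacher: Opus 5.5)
Your proposal is correct and follows essentially the same route as the paper. Both arguments fix $\pi$, take a union bound over the index sets, and pass to the independent $\Ber(\lambda_0)$ model via Proposition~\ref{prop:local-comparison} with $2Qr=O(\log n)$ specified rows. Both then use Chernoff for $N_\theta$, and for $\mathrm D(A,\pi)$ bound the agreement of each pair on a set of size $\lceil\eta n\rceil$ by $(1-2\theta)^{\lceil\eta n\rceil}$, with $Q$ chosen to beat the $\binom n{\lceil\eta n\rceil}$ entropy. The only difference is that you take the union over the sets $R_s$ in the regular model, whereas the paper does it inside the independent-model estimate for a single $Q$-set; this changes nothing.
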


  \begin{proof}
  Fix $\pi$. For the independent matrix $\widetilde A$, write
  $\widetilde T_i:=\{j:\widetilde A_{a_i j}\ne
  \widetilde A_{b_i j}\}$ and $\lambda_0:=d/n$.
  Then $|\widetilde T_i|$ has law
  $\Bin(n,2\lambda_0(1-\lambda_0))$ and mean at least
  $2\theta n$. Chernoff's inequality gives
  \[
   \Prob[|\widetilde T_i|<\theta n]\le e^{-\theta n/4}.
  \]
  For a fixed $Q$-set $G$, condition~\eqref{eq:defective-set}
  for $\widetilde T_i$ implies that on some set $J\subseteq[n]$
  of size $\lceil\eta n\rceil$ the two entries in each pair
  $i\in G$ are equal. These conditions involve disjoint entries.
  Thus a union bound over $J$ gives
  \[
   \Prob\!\left[\left|[n]\setminus\bigcup_{i\in G}
     \widetilde T_i\right|>\eta n\right]
   \le\binom n{\lceil\eta n\rceil}
      (\lambda_0^2+(1-\lambda_0)^2)^{Q\lceil\eta n\rceil}
   \le\left[\frac e\eta(1-2\theta)^Q\right]^{\lceil\eta n\rceil}.
  \]
  Choose $Q$ so that the last bound is at most $e^{-\eta n}$.
  For $r$ distinct indices with small $T_i$, or $r$ disjoint $Q$-sets
  satisfying~\eqref{eq:defective-set}, the corresponding events
  use disjoint row pairs. Their probability in the independent
  model is at most $e^{-cnr}$, where
  $c:=\min\{\theta/4,\eta\}$.
  Each event involves at most $2Qr$ specified rows. Apply
  Proposition~\ref{prop:local-comparison} with
  $L:=2Qk_{\ref{lem:global-conditioning}}+1$; it bounds the
  probability
  in the regular model by
  \[
   \exp\!\left\{-cnr+
   C_{\ref{prop:local-comparison}}(4Q^2r^2+2Qr\log n)
   \right\}.
  \]
  There are at most $m^{Qr}$ choices in either case.
  Since $r\le k_{\ref{lem:global-conditioning}}\log n$,
  the positive terms in the exponent, including $Qr\log m$,
  are $O(r\log n)$. For large $n$, each probability is at most
  $e^{-cnr/2}$. The bound is uniform in $\pi$; averaging over
  $\pi$ completes the proof.
  \end{proof}

  Fix $Q$ as in Proposition~\ref{prop:defect-tails}.

  \subsection{Codimension}

  For $i\in[m]$, let
  \begin{equation}\label{eq:Ei-Hi}
   E_i:=\{x\in\R^n:\supp(x)\subseteq T_i\},\qquad
   H_i:=\{x\in E_i:\langle x,\one\rangle=0\}.
  \end{equation}
  Thus $D_i\in H_i$. For a subspace $V\subseteq\R^n$, put
  $W:=V^\perp$ and
  \begin{equation}\label{eq:local-codimension}
   r_i(V):=\codim_{H_i}(V\cap H_i).
  \end{equation}
  The next lemma applies to every possible row space $V$;
  when $V$ is the row space of $A$, $W=\ker(A)$.
  We use the argument of Liang, Shi, Yu, and Yu
  in~\cite[Lemma~3.5]{Signed} for the sum-zero spaces $H_i$.

  \begin{lemma}\label{lem:visibility}
  Suppose $\dim W=h\ge4Q$ and $W$ contains no nonzero
  $\delta n$-sparse vector. Then
  \[
   \left|\left\{i\in[m]:r_i(V)<\frac{h}{2Q}\right\}\right|
   <Q(\mathrm D(A,\pi)+1).
  \]
  \end{lemma}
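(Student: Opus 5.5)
The idea is to argue by contradiction: if too many indices have small local codimension, we group them into $\mathrm D(A,\pi)+1$ disjoint $Q$-sets, and each group produces a nonzero vector of $W$ vanishing on the union of the corresponding $T_i$. The hypothesis that $W$ has no nonzero $\delta n$-sparse vector then forces every such group to satisfy~\eqref{eq:defective-set}. That contradicts the maximality in the definition of $\mathrm D(A,\pi)$.

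\emph{Step 1: relating $r_i(V)$ to restrictions of $W$.} Fix $i$ and let $R_i:W\to\R^{T_i}$ be restriction, $w\mapsto w|_{T_i}$. I will check that $\dim R_i(W)\le r_i(V)+1$.

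Since $V=W^\perp$, we have $V\cap H_i=\{x\in H_i:\langle x,w\rangle=0\ \forall w\in W\}$. For $x\in H_i\subseteq E_i$ one has $\langle x,w\rangle=\langle x|_{T_i},w|_{T_i}\rangle$. So $V\cap H_i$ is the annihilator inside $H_i$ of the functionals induced by $R_i(W)$. Restricted to $H_i\cong\one_{T_i}^\perp\subseteq\R^{T_i}$, these functionals span a space of dimension $\dim\bigl(R_i(W)+\operatorname{span}(\one_{T_i})\bigr)-1$. This quantity equals the codimension $r_i(V)$. Hence $\dim R_i(W)\le r_i(V)+1$, and therefore
\[
 \dim\ker R_i\ \ge\ h-r_i(V)-1 .
\]
This step only requires care with the sum-zero constraint; it is the one place where the spaces $H_i$, rather than $E_i$, enter.

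\emph{Step 2: grouping the indices.} Suppose, for contradiction, that the set $I$ of indices with $r_i(V)<h/(2Q)$ has $|I|\ge Q(\mathrm D(A,\pi)+1)$. Choose $\mathrm D(A,\pi)+1$ pairwise disjoint subsets $G\subseteq I$, each of size $Q$. For each such $G$, consider
\[
 W_G:=\bigcap_{i\in G}\ker R_i=\Bigl\{w\in W:\ w_j=0\text{ for all } j\in\textstyle\bigcup_{i\in G}T_i\Bigr\}.
\]
By Step 1, $\codim_W W_G\le\sum_{i\in G}(r_i(V)+1)<Q\bigl(h/(2Q)+1\bigr)=h/2+Q$. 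Using $h\ge4Q$, this gives $\dim W_G>h/2-Q\ge h/4>0$. So $W_G$ contains a nonzero vector $w$ with $\supp(w)\subseteq[n]\setminus\bigcup_{i\in G}T_i$.

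\emph{Step 3: the contradiction.} By hypothesis $w$ is not $\delta n$-sparse, so
\[
 \Bigl|[n]\setminus\bigcup_{i\in G}T_i\Bigr|\ \ge\ |\supp(w)|\ >\ \delta n\ >\ \eta n .
\]
Thus each of the $\mathrm D(A,\pi)+1$ disjoint sets $G$ satisfies~\eqref{eq:defective-set}. This contradicts the definition of $\mathrm D(A,\pi)$ as the maximal number of such disjoint $Q$-sets, and proves the strict inequality in the lemma.

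The only genuine obstacle is the linear-algebra identity in Step 1. The rest is counting dimensions, together with the fact that the definition of $\mathrm D(A,\pi)$ requires disjoint index sets while allowing the $T_i$ to overlap.
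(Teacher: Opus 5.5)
Your proposal is correct and follows essentially the same argument as the paper: bound $\dim(W|_{T_i})\le r_i(V)+1$, deduce that the restriction map $W\to\prod_{i\in G}\R^{T_i}$ has rank below $h/2+Q<h$ for any $Q$-set $G$ of such indices, use the absence of nonzero $\delta n$-sparse vectors in $W$ to make each such $G$ satisfy~\eqref{eq:defective-set}, and contradict the maximality of $\mathrm D(A,\pi)$. The only cosmetic difference is in Step~1. You compute $r_i(V)=\dim\bigl(R_i(W)+\operatorname{span}(\one_{T_i})\bigr)-1$ by duality inside $H_i$ (for nonempty $T_i$; the empty case is trivial). The paper instead identifies $\dim(W|_{T_i})$ with $\codim_{E_i}(V\cap E_i)$ and uses that $H_i$ has codimension at most one in $E_i$.
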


  \begin{proof}
  Fix a $Q$-set $G$ whose indices satisfy $r_i(V)<h/(2Q)$.
  Identify $\R^{T_i}$ with $E_i$ by extending vectors by zero,
  and write $W|_{T_i}:=\{w|_{T_i}:w\in W\}$.
  For $i\in G$, the orthogonal complement in $E_i$ of the
  restrictions $W|_{T_i}$ is $V\cap E_i$. Since $H_i$ has
  codimension at most one in $E_i$,
  \[
   \dim(W|_{T_i})
   =\codim_{E_i}(V\cap E_i)\le r_i(V)+1.
  \]
  It follows that the map
  $W\to\prod_{i\in G}\R^{T_i}$ taking $w$ to
  $(w|_{T_i})_{i\in G}$ has rank less than
  $h/2+Q\le3h/4<h$. A nonzero vector in its kernel is
  supported on $[n]\setminus\bigcup_{i\in G}T_i$.
  Its support has size greater than $\delta n>\eta n$, so
  $G$ satisfies~\eqref{eq:defective-set}.
  If there were at least $Q(\mathrm D(A,\pi)+1)$ such indices,
  we could split them into $\mathrm D(A,\pi)+1$ disjoint
  $Q$-sets satisfying~\eqref{eq:defective-set}, a contradiction.
  \end{proof}

  For the auxiliary vectors in Section~\ref{sec:conditional},
  we need a bound of order $n$ for $r_i(U)$. Each added vector
  reduces $r_i$ by at most one, so the following lemma suffices.

  \begin{lemma}\label{lem:transversality}
  There are $\beta,c_{\ref{lem:transversality}}>0$,
  depending only on $\lambda$, such that, for all sufficiently
  large $n$,
  \[
   \Prob_{A,\pi}\left[\exists i\in[m]:
   |T_i|\ge\theta n,\ r_i(U)<\beta n\right]
   \le e^{-c_{\ref{lem:transversality}}n^2}.
  \]
  \end{lemma}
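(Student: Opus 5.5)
The plan is to dualise the codimension into a rank statement about the pair-sum matrix restricted to the columns outside $T_i$. Then we prove that rank statement uniformly over all column sets in the independent model, with failure probability $e^{-cn^2}$. Finally we transfer it to $\Mnd$ with Lemma~\ref{lem:global-conditioning}, whose cost $e^{C n\log n}$ is negligible against $e^{-cn^2}$.

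\emph{Step 1 (linear algebra).} Let $B$ be the $m\times n$ matrix with rows $S_1,\dots,S_m$. For $J\subseteq[n]$, let $B|_J$ be its restriction to the columns in $J$. Fix $i$ and put $J=T_i^c$. A vector of $U$ lies in $E_i$ exactly when it is a combination $\sum_j c_jS_j$ (plus $cA_c$ if $n$ is odd) that vanishes on $J$. Hence
\[
\dim(U\cap H_i)\le\dim(U\cap E_i)\le (m+1)-\rank(B|_J).
\]
Since $\dim H_i=|T_i|-1$, this gives
\[
r_i(U)\ge |T_i|-m-2+\rank(B|_J).
\]
Suppose $\rank(B|_J)\ge\min(m,|J|)-\varepsilon n$. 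If $|J|\ge m$, then $r_i(U)\ge|T_i|-2-\varepsilon n\ge(\theta-\varepsilon)n-2$. If $|J|<m$, then $r_i(U)\ge n-m-2-\varepsilon n\ge(1/2-\varepsilon)n-2$. With $\varepsilon=\theta/4$ and $\beta=\theta/4$, the lemma therefore follows from the event
\[
\mathcal R:=\bigl\{\rank(B|_J)\ge\min(m,|J|)-\varepsilon n\ \text{for all } J\subseteq[n]\bigr\},
\]
which depends on neither $i$ nor on $T_i$ being random.

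\emph{Step 2 (independent model).} Fix $\pi$ and $J$, and replace $A$ by $\widetilde A$. The entries of $\widetilde B|_J$ are then i.i.d. copies of a sum of two independent $\Ber(\lambda_0)$ variables. Each atom of this law has mass at most $p:=\max\{\lambda_0^2,(1-\lambda_0)^2,2\lambda_0(1-\lambda_0)\}\le1-\theta$.

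Suppose $\rank(\widetilde B|_J)=r\le\min(m,|J|)-\varepsilon n$. Choose a set $R$ of $r$ rows spanning the row space (at most $2^n$ choices). Reveal these rows, so their span $V_R\subseteq\R^J$ has dimension $r$. Each of the other $m-r$ independent rows must lie in $V_R$. A vector with i.i.d. coordinates lies in a fixed $r$-dimensional subspace of $\R^{|J|}$ with probability at most $p^{|J|-r}$: project onto $r$ coordinates on which the projection is injective, and condition on those coordinates. This gives
\[
\Prob[\rank(\widetilde B|_J)\le \min(m,|J|)-\varepsilon n]\le 2^n\, p^{(m-r)(|J|-r)}\le 2^n(1-\theta)^{\varepsilon^2n^2}.
\]
A union bound over the $2^n$ sets $J$ then gives $\Prob[\widetilde{\mathcal R}^c]\le 4^ne^{-\theta\varepsilon^2n^2}$. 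This bound is uniform in $\pi$.

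\emph{Step 3 (transfer).} By Lemma~\ref{lem:global-conditioning},
\[
\Prob_A[\mathcal R^c]\le e^{C_{\ref{lem:global-conditioning}}n\log n}\,\Prob[\widetilde{\mathcal R}^c]\le e^{-cn^2}
\]
for large $n$. Averaging over $\pi$ finishes the proof.

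The main obstacle is Step 1: finding the reduction that survives when $T_i$ is almost all of $[n]$, where $U\cap E_i$ can genuinely have dimension of order $n$. The case split on $|J|$ versus $m$ handles this by using the bound $\dim U\le n-m$. The probabilistic part should be routine, because the row-wise Odlyzko argument gives an $n^2$ exponent that absorbs both the $4^n$ union bound and the global conditioning cost.
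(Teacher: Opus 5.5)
Your proof is correct, and it reaches the bound by a different probabilistic route than the paper.

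The linear-algebra step is essentially the paper's. The paper also turns $r_i(U)<\beta n$ into a rank deficit of order $n$ for the pair-sum matrix restricted to the columns $T_i^c$. It passes through $U_0$ and gets $\rank(S_{[m],T^c})<m-t+\beta n+2$, while you count the at most $m+1$ generators of $U$ directly.

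The difference is in the probability step. The paper fixes $i$ and conditions on the two rows $\widetilde A_{a_i},\widetilde A_{b_i}$, which freezes $T=T_i$. It then deletes row $i$, so the submatrix $\widetilde S_{[m]\setminus\{i\},T^c}$ is independent of the conditioning. Finally it cites Polavarapu's Corollary~1.2 for a rank deficit of $\lceil\theta n/4\rceil$, takes a union bound over $i$, and transfers to $\Mnd$.

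You avoid the dependence between $T_i$ and the $i$-th pair sum in another way: you require the rank bound simultaneously for all $2^n$ column sets $J$. This also removes the union over $i$. You then prove each fixed-$J$ bound by the elementary Odlyzko-type count $2^m p^{(m-r)(|J|-r)}$. Summing over spanning sets $R$ of every size is covered by the $2^m$ factor, since $(m-r)(|J|-r)$ decreases in $r$.

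Because the deficit is linear in $n$, this crude count already gives $e^{-c\varepsilon^2 n^2}$. That absorbs the $4^n$ from the two union bounds and the $e^{Cn\log n}$ cost from Lemma~\ref{lem:global-conditioning}. So your argument is self-contained and shows that no sharp rank-deficiency theorem is needed for this lemma, at the cost of worse constants.

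The paper's route uses a heavier external input. In exchange it gives the sharper dependence $\exp\{-c\min(m-1,n-t)\lceil\theta n/4\rceil\}$ on the deficit, which is more than this lemma requires.
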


  \begin{proof}
  Let $\beta:=\theta/8$ and fix $\pi$ and $i\in[m]$.
  The next calculation holds for every $0$--$1$ matrix.
  Let $S$ have rows $S_1,\ldots,S_m$, let $U_0$ be their span,
  and write $T:=T_i$ and $t:=|T|$. The space $U$ is obtained
  from $U_0$ by adding at most one row. If $t\ge\theta n$ and
  $r_i(U)<\beta n$, then $\dim H_i=t-1$ gives
  \[
   \dim(U_0\cap E_i)
   \ge\dim(U\cap H_i)-1>t-\beta n-2.
  \]
  Restriction of $U_0$ to $T^c$ has image the row space of
  $S_{[m],T^c}$ and kernel $U_0\cap E_i$. Thus
  \[
   \rank(S_{[m],T^c})
   =\dim U_0-\dim(U_0\cap E_i)
   <m-t+\beta n+2.
  \]
  Deleting row $i$ cannot increase rank. Moreover,
  \[
   \min(m-1,n-t)-(m-t+\beta n+2)
   =\min(t-1,n-m)-\beta n-2
   \ge\theta n/4
  \]
  for all sufficiently large $n$, since $n-m\ge n/2$
  and $\theta\le1/4$. Hence
  \begin{equation}\label{eq:rank-deficit-submatrix}
   \rank(S_{[m]\setminus\{i\},T^c})
   \le\min(m-1,n-t)-\left\lceil\frac{\theta n}{4}\right\rceil.
  \end{equation}

  For a fixed $\pi$, let $\mathcal B_i$ be the event on the
  left side of the lemma for this $i$, defined for either the
  regular or the independent matrix. Conditional on the two
  rows $\widetilde A_{a_i},\widetilde A_{b_i}$ of the independent
  matrix, $T$ is fixed. The entries of
  $\widetilde S_{[m]\setminus\{i\},T^c}$ are independent
  $\Bin(2,\lambda_0)$ variables, $\lambda_0=d/n$, with largest
  atom at most $1-2\theta<1$. If
  $\min(m-1,n-t)<\lceil\theta n/4\rceil$, the event
  $\mathcal B_i$ is empty by~\eqref{eq:rank-deficit-submatrix}.
  Otherwise, the result of Polavarapu
  in~\cite[Corollary~1.2]{Polavarapu} gives, uniformly over the
  revealed pair,
  \[
   \Prob[\widetilde A\in\mathcal B_i
         \mid\widetilde A_{a_i},\widetilde A_{b_i}]
   \le\exp\!\left\{-c\min(m-1,n-t)
          \left\lceil\frac{\theta n}{4}\right\rceil\right\}
   \le e^{-cn^2}.
  \]
  Averaging over the pair and taking a union bound over $i$
  gives $\Prob[\widetilde A\in\bigcup_i\mathcal B_i]
  \le ne^{-cn^2}$.

  Every matrix in $\Mnd$ has the same probability under the
  independent $\Ber(\lambda_0)$ law. Conditioning
  $\widetilde A$ to belong to $\Mnd$ therefore gives the law
  of $A$. By Lemma~\ref{lem:global-conditioning},
  \[
   \Prob_A\left[\bigcup_i\mathcal B_i\right]
   \le\frac{ne^{-cn^2}}{\Prob[\widetilde A\in\Mnd]}
   \le\exp\{-cn^2+C_{\ref{lem:global-conditioning}}n\log n
              +\log n\}
   \le e^{-c_{\ref{lem:transversality}}n^2}.
  \]
  This holds for every fixed $\pi$; we average over $\pi$.
  \end{proof}

  \section{Relative anticoncentration and the conditional bound}
  \label{sec:conditional}

  We adapt the relative bound and the thin and thick subspace
  argument of Hunter, Kwan, Sauermann, and Sawhney
  in~\cite[Proposition~2.1 and Claims~2.2--2.3]{HKSS} to the
  conditional law of Lemma~\ref{lem:pair-product-law}. Each row
  difference has coordinate sum zero, and the support sets may differ.

  \subsection{Relative anticoncentration}

  For $0<p\le1/2$, let $\mu_p(1)=\mu_p(-1)=p$ and
  $\mu_p(0)=1-2p$. Fix $p=1/32$. For even $N\ge2$, put
  $H_0=\{x\in\R^N:\langle x,\one\rangle=0\}$.
  Let $X$ and $Y$ have laws $\mu_{1/2}^{\otimes N}$ and
  $\mu_p^{\otimes N}$, respectively, conditioned to lie in $H_0$.

  \begin{lemma}\label{lem:balanced-relative}
  There are absolute constants
  $k_{\ref{lem:balanced-relative}}\in\mathbb N$,
  $C_{\ref{lem:balanced-relative}}>0$, and $\gamma\in(0,1)$
  such that, for every rational subspace $V\subseteq\R^N$, if
  $r=\codim_{H_0}(V\cap H_0)$, then
  \begin{equation}\label{eq:balanced-relative-bound}
   \Prob[X\in V]\le\gamma^r\Prob[Y\in V]
   \qquad\text{for }r\ge k_{\ref{lem:balanced-relative}},
  \end{equation}
  and, for all $r\ge0$,
  \begin{equation}\label{eq:balanced-lazy-absolute}
   \Prob[Y\in V]\le
   C_{\ref{lem:balanced-relative}}\sqrt N\,(1-2p)^{r+1}.
  \end{equation}
  \end{lemma}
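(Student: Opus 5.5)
The plan is to derive both bounds from the unconditioned relative inequality of Hunter, Kwan, Sauermann, and Sawhney~\cite[Proposition~2.1]{HKSS}, which compares $\mu_{1/2}^{\otimes N}$ with $\mu_p^{\otimes N}$ on arbitrary rational subspaces, together with a Fourier/Halász-type bound for the lazy walk. The sum-zero conditioning will be handled by lifting from $V\cap H_0$ to subspaces of $\R^N$. Set $V'=V\cap H_0$. Since $X,Y\in H_0$ almost surely, $\Prob[X\in V]=\Prob[X\in V']$, and the same holds for $Y$. For the unconditioned vectors $X_0\sim\mu_{1/2}^{\otimes N}$ and $Y_0\sim\mu_p^{\otimes N}$ we have
\[
\Prob[X\in V']=\frac{\Prob[X_0\in V']}{\Prob[X_0\in H_0]},
\qquad
\Prob[Y\in V']=\frac{\Prob[Y_0\in V']}{\Prob[Y_0\in H_0]}.
\]
Local limit estimates give $\Prob[X_0\in H_0]=\binom N{N/2}2^{-N}\asymp N^{-1/2}$ and $\Prob[Y_0\in H_0]\asymp N^{-1/2}$, because the sum of $Y_0$ is a lazy walk with variance $2pN$ and even support is guaranteed since $N$ is even. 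The ratio of these normalizations is therefore bounded by an absolute constant $C_0$.

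\emph{Relative bound~\eqref{eq:balanced-relative-bound}.} The subspace $V'$ is rational and $\codim_{\R^N}(V')=r+1$. The HKSS comparison gives $\Prob[X_0\in V']\le\gamma_0^{\,r+1}\Prob[Y_0\in V']$ for some absolute $\gamma_0<1$, at least once $r+1$ exceeds an absolute threshold (their statement may carry a constant or an additive requirement on the codimension). Combining this with the normalization ratio yields $\Prob[X\in V]\le C_0\gamma_0^{\,r+1}\Prob[Y\in V]$. Now pick $\gamma\in(\gamma_0,1)$ and take $k_{\ref{lem:balanced-relative}}$ large enough that $C_0\gamma_0^{\,r+1}\le\gamma^r$ for all $r\ge k_{\ref{lem:balanced-relative}}$; this absorbs both $C_0$ and any threshold in the HKSS statement. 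The only point to check is that $p=1/32$ lies within the range allowed by their proposition. If it does not, we would instead run their argument directly, using the pointwise Fourier domination $|\widehat{\mu_{1/2}}(t)|=|\cos t|\le \widehat{\mu_p}(t)^{\,s}$ for a suitable exponent $s$, which holds once $p$ is small.

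\emph{Absolute bound~\eqref{eq:balanced-lazy-absolute}.} It suffices to show $\Prob[Y_0\in V']\le(1-2p)^{r+1}$, since dividing by $\Prob[Y_0\in H_0]\gtrsim N^{-1/2}$ then produces the factor $C\sqrt N$. Because $\codim V'=r+1$, some set of $r+1$ coordinates $I$ has the property that $V'$ is a graph over $I^c$: the coordinates in $I$ are determined by those in $I^c$. Condition on $Y_0|_{I^c}$. Then $Y_0|_I$ must equal one specific vector, and each coordinate takes any given value with probability at most $\max(1-2p,p)=1-2p$, independently across coordinates. This gives $\Prob[Y_0\in V']\le(1-2p)^{r+1}$, with no rationality assumption needed for this part.

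The main obstacle is the first bound, specifically transferring the HKSS comparison to the conditioned measures while staying within its hypotheses (rationality and the codimension threshold). Intersecting with the hyperplane $H_0$ keeps $V'$ rational, and the ratio of the $\asymp N^{-1/2}$ normalizations is an absolute constant, so the conditioning costs only a constant factor, which the threshold $k_{\ref{lem:balanced-relative}}$ absorbs.
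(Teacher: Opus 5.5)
Your proposal is correct and follows the paper's proof essentially step for step. You intersect with $H_0$, apply \cite[Proposition~2.1]{HKSS} to the rational codimension-$(r+1)$ subspace $V\cap H_0$, and absorb the bounded ratio of the two $\asymp N^{-1/2}$ normalizations into a larger $\gamma$ and threshold; for~\eqref{eq:balanced-lazy-absolute} you fix $r+1$ determined coordinates and divide by $\Prob[Y_0\in H_0]$. One aside: your fallback domination $|\cos t|\le\widehat{\mu_p}(t)^s$ fails at $t=\pi$, where $\widehat{\mu_p}(\pi)=1-4p<1$; a Kahn--Koml\'os--Szemer\'edi-type comparison needs a frequency doubling, e.g.\ $|\cos t|\le\widehat{\mu_p}(2t)^{1/(8p)}$. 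The paper never needs this fallback, since it invokes HKSS directly with $p=1/32$.
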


  \begin{proof}
  Let $W=V\cap H_0$, and let $X'\sim\mu_{1/2}^{\otimes N}$ and
  $Y'\sim\mu_p^{\otimes N}$.
  The rational subspace $W$ has codimension $r+1$ in $\R^N$,
  and $W\cap\Q^N$ has the same codimension in $\Q^N$.
  The inequality of Hunter, Kwan, Sauermann, and Sawhney
  in~\cite[Proposition~2.1]{HKSS} gives, for some $\gamma_0<1$
  and all sufficiently large $r$,
  \[
   \Prob[X'\in W]\le\gamma_0^{r+1}\Prob[Y'\in W].
  \]
  Moreover,
  \begin{align*}
   \Prob[X'\in H_0]
   &=2^{-N}\binom N{N/2}\asymp N^{-1/2},\\
   \Prob[Y'\in H_0]
   &=\frac1{2\pi}\int_{-\pi}^{\pi}
      (1-2p+2p\cos t)^N\,dt\asymp N^{-1/2}.
  \end{align*}
  The second comparison follows from $1-\cos t\asymp t^2$ on
  $[-\pi,\pi]$. Since $W\subseteq H_0$, conditioning on $H_0$
  therefore gives
  \[
   \Prob[X\in V]\le C\gamma_0^{r+1}\Prob[Y\in V].
  \]
  Taking $\gamma=\sqrt{\gamma_0}$ and increasing
  $k_{\ref{lem:balanced-relative}}$ proves
  \eqref{eq:balanced-relative-bound}.

  For the second bound, $W$ is given by $r+1$ independent linear
  equations. Choose $r+1$ coordinates determined by these equations
  once the other coordinates are fixed. Each atom of $\mu_p$ is at
  most $1-2p$, so
  $\Prob[Y'\in W]\le(1-2p)^{r+1}$.
  Divide by $\Prob[Y'\in H_0]\asymp N^{-1/2}$.
  \end{proof}

  \subsection{The conditional bound}

  Fix a realization of $\mathcal G_\pi$ of positive probability
  outside the event in Lemma~\ref{lem:transversality}. Thus
  \[
   r_i(U)\ge\beta n\qquad\text{whenever }|T_i|\ge\theta n.
  \]
  In this subsection probabilities are conditional on this realization.
  The space $U$ and sets $T_i$ are fixed; once $h$ is fixed, so is
  $\ell$. Put $X_i=D_i$ and
  $V_X=U+\operatorname{span}(X_1,\ldots,X_m)$.
  By Lemma~\ref{lem:pair-product-law}, the $X_i$ are independent.
  For each $i$, let $Z_i$ be zero off $T_i$ and have law
  $\mu_p^{\otimes T_i}$ conditioned on coordinate sum zero on $T_i$;
  put $Z_i=0$ if $T_i=\varnothing$. Choose the $Z_i$ independently
  of each other and of the $X_i$.

  Each nonempty $T_i$ has even size by
  Lemma~\ref{lem:pair-product-law}. The restriction of
  $V\cap H_i$ to $T_i$ has codimension $r_i(V)$ in the sum-zero
  hyperplane of $\R^{T_i}$. Applying
  Lemma~\ref{lem:balanced-relative} in $H_i$ gives, for rational $V$,
  \begin{equation}\label{eq:relative-consequence}
   \Prob[X_i\in V]
   \le\gamma^{h/(2Q)}\Prob[Z_i\in V]
   \qquad\text{if }r_i(V)\ge h/(2Q)
  \end{equation}
  provided $h\ge2Qk_{\ref{lem:balanced-relative}}$.
  The spaces below are rational, since they are spanned by vectors
  with integer coordinates. Also, if a rational subspace
  $V'\supseteq U$ satisfies
  $\dim(V'/U)\le\beta n/2$, then
  $r_i(V')\ge r_i(U)-\dim(V'/U)\ge\beta n/2$ for
  $|T_i|\ge\theta n$. Hence, for all sufficiently large $n$,
  \begin{equation}\label{eq:absolute-consequence}
   \Prob[Z_i\in V']
   \le C_{\ref{lem:balanced-relative}}\sqrt n\,
         (1-2p)^{\beta n/2+1}
   \le(1-p)^{\beta n/2}
   \qquad(|T_i|\ge\theta n).
  \end{equation}
  The last inequality uses $1-2p<1-p$.

  \begin{proposition}\label{prop:fixed-skeleton}
  There are constants $c_{\ref{prop:fixed-skeleton}}>0$ and
  $k_{\ref{prop:fixed-skeleton}}\in\mathbb N$, depending only on
  $\lambda$, such that the following holds for all sufficiently
  large $n$. Let $h\ge k_{\ref{prop:fixed-skeleton}}$ be an integer
  and let $\ell$ be as in~\eqref{eq:quotient-deficit}. If
  \begin{equation}\label{eq:fixed-skeleton-input}
   \ell-N_\theta-Q\bigl(\mathrm D(A,\pi)+1\bigr)\ge h/4,
  \end{equation}
  then
  \[
   \Prob[\codim(V_X)=h,\ V_X^\perp
       \cap\mathrm{Sparse}(\delta)=\{0\}]
   \le e^{-c_{\ref{prop:fixed-skeleton}}nh}.
  \]
  \end{proposition}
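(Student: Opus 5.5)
The plan is to work conditionally on the fixed realization of $\mathcal G_\pi$, with the $X_i$ independent, and to prove the bound through a structural reduction followed by the thin/thick split of Hunter, Kwan, Sauermann, and Sawhney. The event $\{\codim(V_X)=h\}$ means $\dim(V_X/U)=m-\ell$. So there is a set $I\subseteq[m]$ of size $m-\ell$ such that $V_X=U+\operatorname{span}(X_i:i\in I)$, and every $X_j$ with $j\notin I$ lies in $V_X$. Among the $\ell$ indices outside $I$, I discard the at most $N_\theta$ indices with $|T_j|<\theta n$. I also discard the indices with $r_j(V_X)<h/(2Q)$. By Lemma~\ref{lem:visibility}, applied with $W=V_X^\perp$ (which has dimension $h\ge4Q$ and no nonzero $\delta n$-sparse vector on the event), there are fewer than $Q(\mathrm D(A,\pi)+1)$ of these. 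By~\eqref{eq:fixed-skeleton-input}, what survives is a set $J$ with $|J|\ge h/4$. For each $j\in J$ we have $X_j\in V_X$, $|T_j|\ge\theta n$ and $r_j(V_X)\ge h/(2Q)$. Since $\mathrm D$ and $N_\theta$ are $\mathcal G_\pi$-measurable, this selection uses only fixed data together with $V_X$. I then take a union bound over the pair $(I,J)$. This costs at most $4^n$, which is harmless against $e^{-cnh}$ once $h\ge k_{\ref{prop:fixed-skeleton}}$ is large. After revealing $(X_i)_{i\in I}$, the space $V:=V_X$ is fixed and rational, and the $X_j$, $j\in J$, remain independent.

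Next I would split according to the mixture quantity $q(V):=|J|^{-1}\sum_{j\in J}\Prob[Z_j\in V]$. In the \emph{thin} case $q(V)\le e^{-c_1n}$, I combine~\eqref{eq:relative-consequence} with the AM--GM inequality:
\[
\prod_{j\in J}\Prob[X_j\in V]\le\gamma^{h|J|/(2Q)}\prod_{j\in J}\Prob[Z_j\in V]\le q(V)^{|J|}\le e^{-c_1nh/4}.
\]
Since $|J|\ge h/4$, this beats the $4^n$ union bound for $h$ large.

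The \emph{thick} case $q(V)>e^{-c_1n}$ is the main obstacle, because there $V$ itself must be shown unlikely. Following Polavarapu, I would introduce $M=\lfloor\varepsilon n\rfloor$ independent auxiliary vectors $Z^{(1)},\dots,Z^{(M)}$. Each is drawn by picking a uniform $j\in J$ and then sampling from the law of $Z_j$. For fixed $V$, the probability that all of them lie in $V$ is $q(V)^M\ge e^{-c_1\varepsilon n^2}$. Hence
\[
\Prob[\text{event},\ V_X\text{ thick}]\le e^{c_1\varepsilon n^2}\,\Prob[\text{event},\ Z^{(1)},\dots,Z^{(M)}\in V_X].
\]
I then swap the roles of the $X_j$ and the auxiliary vectors. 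As in the paper's overview, I first choose a subset of the $Z^{(s)}$ extending a basis of $U$ to a basis of $U+\operatorname{span}(Z^{(s)})$. There are at most $M\le\beta n/2$ of them, provided $\varepsilon\le\beta/2$. Then I extend by differences $X_i$ and require everything else to lie in the resulting span. Two kinds of factor appear. Each auxiliary vector not selected has to lie in the span of $U$, the selected auxiliary vectors and the $X_i$. Each $X_j$ with $j\in J$ gains $\gamma^{h/(2Q)}$ from~\eqref{eq:relative-consequence}, comparing against $Z_j$. The delicate step is to arrange the exponents so that these gains, of total size $\gamma^{h|J|/(2Q)}$, combine with the $M$ auxiliary events to produce $e^{-cnh}$ and still absorb the factor $e^{c_1\varepsilon n^2}$. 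I would handle this with~\eqref{eq:absolute-consequence}. Every selected auxiliary vector lies, before its selection, in a span containing $U$ with $\dim(V'/U)\le\beta n/2$, so each such event costs $(1-p)^{\beta n/2}$.

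Finally I choose the parameters. I take $\varepsilon$ small relative to $\beta$ and $p$, then $c_1$ smaller still, so that $e^{c_1\varepsilon n^2}$ is dominated. If this bookkeeping fails to yield a factor of order $n$ per unit of $h$, my fallback is to run the thick case only through $\ell$ of the auxiliary vectors rather than through all $M=\lfloor\varepsilon n\rfloor$ of them, as in the HKSS Claim~2.3 argument.
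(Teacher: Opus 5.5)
Your reduction to non-exceptional indices and your thin case are sound. Thresholding the $Z$-average and then using \eqref{eq:relative-consequence} with AM--GM is a valid variant of the paper's thin case. The thick case, however, has a genuine gap, exactly at the step you call delicate. Bounding $q(V)^M\ge e^{-c_1\varepsilon n^2}$ and then trying to absorb $e^{c_1\varepsilon n^2}$ cannot work when $h=o(n)$. The loss $q(V)^{-M}$ is worst for barely thick $V$, while the decay of the augmented event is weakest for very thick $V$. Concretely, $\dim(V/U)=m-\ell\ge m-h\ge M$, so all auxiliary vectors may be selected into the basis. The only decay then comes from the $\ell+M$ differences outside the basis. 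That decay is roughly $(\gamma^{h/(2Q)}q(V))^{\ell+M}$, which is only $e^{-O(h\varepsilon n)}$ when $q(V)\ge e^{-O(h)}$. Relatedly, attaching the relative gain to the $X_j$ with $j\in J$ yields only $\gamma^{h|J|/(2Q)}=e^{-O(h^2)}$ when $|J|\approx h/4$. Your fallback with $\ell\le h$ auxiliary vectors has the same defect.

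The missing idea is that the factor $\rho_V^M$ must cancel exactly, and the relative gain must be taken once per auxiliary vector. The paper measures thickness by the $X$-average $\rho_V=N^{-1}\sum_{i\notin E}\Prob[X_i\in V]$. It uses $\Prob[Y_j\in V]\ge\gamma^{-h/(2Q)}\rho_V$ in the lower bound and proves the upper bound $2^N\rho_V^M\sum_{I,J'}\Prob[V_{I,J'}=V]$. Dividing leaves $2^N\gamma^{hM/(2Q)}$ with $M=\lfloor\beta n/2\rfloor$, and this is the source of $e^{-cnh}$.

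That upper bound needs two ingredients your sketch gets wrong. First, the auxiliary law must be the mixture over all non-exceptional indices $[m]\setminus E$, with $E$ fixed before sampling. The paper partitions the candidate spaces by $E(V)$, at cost $m^{q_*}$. Your $J$ instead depends on $V_X$ and may have only about $h/4$ elements. Selecting $|J'|$ auxiliary vectors frees $|J'|$ further differences at arbitrary indices, leaving at least $s+|J'|$ outside $I\cup E$. Only an average over all of $[m]\setminus E$ lets Maclaurin's inequality bound their product by $2^N\rho_V^{s+|J'|}$.

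Second, the factor $(1-p)^{\beta n/2}$ is paid by the \emph{non-selected} auxiliary vectors. The selected ones by definition avoid the preceding span and pay nothing. This cost is available because $J'$ extends a basis of $U$ to one of $U+\operatorname{span}(Y_1,\ldots,Y_M)$. Hence every non-selected $Y_j$ lies in $W_{J'}=U+\operatorname{span}(Y_j:j\in J')$, which involves none of the $X_i$. There $\dim(W_{J'}/U)\le M\le\beta n/2$, so \eqref{eq:absolute-consequence} applies. The full span including the $X_i$, which you use, has $\dim(V/U)=m-\ell$, and no such bound holds for it. With thickness meaning $\rho_V>(1-p)^{\beta n/2}$, these $M-|J'|$ factors are each below $\rho_V$. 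They combine with $\rho_V^{s+|J'|}$ into the $\rho_V^M$ that cancels.
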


  \begin{proof}
  Take $k_{\ref{prop:fixed-skeleton}}$ sufficiently large that
  $h\ge\max\{2Qk_{\ref{lem:balanced-relative}},4Q\}$, and let
  \[
   q_*:=N_\theta+Q\bigl(\mathrm D(A,\pi)+1\bigr).
  \]
  The event is empty unless $0\le\ell\le m$. Consider the possible
  values $V$ of $V_X$ on the event in the proposition. Each contains
  $U$, is rational, has codimension $h$, and satisfies
  $V^\perp\cap\mathrm{Sparse}(\delta)=\{0\}$. For each such $V$, put
  \[
   E(V):=\{i:|T_i|<\theta n\}
         \cup\{i:r_i(V)<h/(2Q)\}.
  \]
  Lemma~\ref{lem:visibility} gives $|E(V)|<q_*$. As $q_*$ is an
  integer and $m\ge2$, there are at most
  $\sum_{j<q_*}\binom mj\le m^{q_*}$ possible sets $E(V)$.

  Partition these spaces according to $E(V)$.
  Fix one class, write $E$ for its common set $E(V)$, and let
  $\mathcal V$ be the corresponding set of spaces. We bound
  $\Prob[V_X\in\mathcal V]$ before summing over classes.
  We have not conditioned on $E(V_X)=E$, so the $X_i$ remain
  independent. Write
  \[
   N=m-|E|,\qquad s=\ell-|E|\ge h/4.
  \]
  Here $N\ge s>0$. For $V\in\mathcal V$ and $i\notin E$,
  $|T_i|\ge\theta n$ and $r_i(V)\ge h/(2Q)$.
  Also, $\dim(V/U)=n-h-\dim U=m-\ell$.
  Set
  \[
   \rho_V=\frac1N\sum_{i\in[m]\setminus E}\Prob[X_i\in V].
  \]
  The $X_i$ need not have the same law, so we use this average.
  Maclaurin's inequality, as used by Polavarapu
  in~\cite[Claim~4.4]{Polavarapu},
  gives, for every $S\subseteq[m]\setminus E$ of size $t$,
  \begin{equation}\label{eq:maclaurin-product-bound}
   \prod_{i\in S}\Prob[X_i\in V]
   \le\sum_{\substack{S'\subseteq[m]\setminus E\\|S'|=t}}
         \prod_{i\in S'}\Prob[X_i\in V]
   \le\binom Nt\rho_V^t\le2^N\rho_V^t.
  \end{equation}
  Call $V\in\mathcal V$ thin if
  $\rho_V\le(1-p)^{\beta n/2}$, and thick otherwise; denote
  the two classes by $\mathcal V_{\mathrm{thin}}$ and
  $\mathcal V_{\mathrm{thick}}$.

  \medskip\noindent\emph{Thin subspaces.}
  If $V_X\in\mathcal V_{\mathrm{thin}}$, some set
  $I\subseteq[m]$ of size $m-\ell$ satisfies
  $V_I:=U+\operatorname{span}(X_i:i\in I)=V_X$.
  Fix $I$ and reveal $(X_i)_{i\in I}$. The remaining vectors are
  independent, and on $V_I\in\mathcal V_{\mathrm{thin}}$ at least
  $m-|I|-|E|=s$ indices outside $I$ also lie outside $E$.
  By~\eqref{eq:maclaurin-product-bound}, the conditional probability
  that all remaining $X_j$ lie in $V_I$ is at most
  $2^N\rho_{V_I}^s\le2^N(1-p)^{\beta ns/2}$.
  Averaging over the selected vectors and summing over $I$ gives
  \begin{equation}\label{eq:thin-bound}
   \Prob[V_X\in\mathcal V_{\mathrm{thin}}]
   \le\binom m{m-\ell}2^N(1-p)^{\beta ns/2}.
  \end{equation}

  \medskip\noindent\emph{Thick subspaces.}
  Let $M=\lfloor\beta n/2\rfloor$. Choose independent
  $Y_1,\ldots,Y_M$, independently of the $X_i$, each by selecting
  an index uniformly from $[m]\setminus E$ and taking an independent
  copy of the corresponding $Z_i$. For $V\in\mathcal V_{\mathrm{thick}}$,
  \eqref{eq:relative-consequence} gives
  \[
   \Prob[Y_j\in V]
   =\frac1N\sum_{i\notin E}\Prob[Z_i\in V]
   \ge\gamma^{-h/(2Q)}\rho_V.
  \]
  For fixed $V$, the event $\{V_X=V\}$ depends only on the $X_i$.
  The $Y_j$ are independent of these vectors and of one another,
  and adding them leaves the span equal to $V$ if they all lie in $V$.
  Hence
  \begin{equation}\label{eq:thick-lower}
   \Prob[U+\operatorname{span}(X_1,\ldots,X_m,Y_1,\ldots,Y_M)=V]
   \ge\gamma^{-hM/(2Q)}\rho_V^M\Prob[V_X=V].
  \end{equation}

  If the combined span is $V$, choose $J\subseteq[M]$ so that the
  $Y_j$, $j\in J$, extend a basis of $U$ to a basis of
  $U+\operatorname{span}(Y_1,\ldots,Y_M)$. Then choose
  $I\subseteq[m]$ so that the $X_i$, $i\in I$, extend this basis
  to $V$. Thus $|I|+|J|=m-\ell$, and, writing
  \[
   W_J=U+\operatorname{span}(Y_j:j\in J),\qquad
   V_{I,J}=W_J+\operatorname{span}(X_i:i\in I),
  \]
  we have $V_{I,J}=V$, every $Y_j$ with $j\notin J$ lies in
  $W_J$, and every $X_i$ with $i\notin I$ lies in $V$.
  Fix $I,J$ and reveal only $(Y_j)_{j\in J}$ and $(X_i)_{i\in I}$.
  We do not condition on these vectors forming a basis.
  The other vectors remain independent. On $V_{I,J}=V$,
  the bound on $r_i(U)$ and $|J|\le M$ give
  \[
   r_i(W_J)\ge\beta n-|J|\ge\beta n/2
   \qquad(i\notin E).
  \]
  Hence~\eqref{eq:absolute-consequence} gives, after this reveal,
  $\Prob[Y_j\in W_J]\le(1-p)^{\beta n/2}<\rho_V$
  for $j\notin J$. Also,
  $|[m]\setminus(I\cup E)|\ge s+|J|$, so
  \eqref{eq:maclaurin-product-bound} bounds the probability that
  the $X_i$ with $i\notin I\cup E$ lie in $V$ by
  $2^N\rho_V^{s+|J|}$.
  The conditional probability of both requirements is therefore at
  most $2^N\rho_V^{M+s}\le2^N\rho_V^M$.
  Only values of the revealed vectors with $V_{I,J}=V$ contribute.
  Averaging over the selected vectors and summing over $I,J$ yields
  \[
   \Prob[U+\operatorname{span}(X_1,\ldots,X_m,Y_1,\ldots,Y_M)=V]
   \le2^N\rho_V^M
   \sum_{\substack{I\subseteq[m],\ J\subseteq[M]\\
                    |I|+|J|=m-\ell}}
         \Prob[V_{I,J}=V].
  \]
  Since $\rho_V>0$, compare this with~\eqref{eq:thick-lower}
  and sum over $V\in\mathcal V_{\mathrm{thick}}$.
  For fixed $I,J$, the events $\{V_{I,J}=V\}$ are disjoint, and
  there are $\binom{m+M}{m-\ell}$ choices of $I,J$. Therefore
  \begin{equation}\label{eq:thick-bound}
   \Prob[V_X\in\mathcal V_{\mathrm{thick}}]
   \le2^N\gamma^{hM/(2Q)}\binom{m+M}{m-\ell}.
  \end{equation}

  By~\eqref{eq:fixed-skeleton-input}, $q_*<\ell\le h$.
  Using $N\le m$, $s\ge h/4$, and $M\ge\beta n/4$ for large $n$,
  we sum~\eqref{eq:thin-bound} and~\eqref{eq:thick-bound} over
  at most $m^{q_*}$ classes to obtain
  \[
  \begin{aligned}
   &\Prob[\codim(V_X)=h,\ V_X^\perp
        \cap\mathrm{Sparse}(\delta)=\{0\}]\\
   &\qquad\le m^{q_*}\left(
       2^{2m}(1-p)^{\beta nh/8}
      +2^{2m+M}\gamma^{\beta nh/(8Q)}\right).
  \end{aligned}
  \]
  Since $m^{q_*}\le n^h$ and $m,M=O(n)$, taking
  $k_{\ref{prop:fixed-skeleton}}$ large and then $n$ large makes
  the right-hand side at most $e^{-c_{\ref{prop:fixed-skeleton}}nh}$.
  \end{proof}

  \subsection{Proof of Proposition~\ref{cor:nonsparse-tail}}
  \label{sec:nonsparse}

  \begin{proof}[Proof of Proposition~\ref{cor:nonsparse-tail}]
  Return to the original law. Increase
  $k_{\ref{cor:nonsparse-tail}}$ so that
  Proposition~\ref{prop:fixed-skeleton} applies and $h\ge8Q$.
  Fix an integer
  $k_{\ref{cor:nonsparse-tail}}\le h<
  k_{\ref{lem:global-conditioning}}\log n$, and let
  $\mathcal N_h$ be the event that $\corank(A)=h$ and
  $\ker(A)\cap\mathrm{Sparse}(\delta)=\{0\}$. Let
  $q_*=N_\theta+Q(\mathrm D(A,\pi)+1)$.
  By~\eqref{eq:capture-inequality},
  \begin{equation}\label{eq:exact-capture}
   \Prob_A[\mathcal N_h]
   \le n^{\lceil h/2\rceil}
   \Prob_{A,\pi}[\mathcal N_h,\ \ell\ge\lceil h/2\rceil].
  \end{equation}
  On the event on the right, if $q_*>\ell/2$, then
  \[
   N_\theta+Q\mathrm D(A,\pi)>h/4-Q\ge h/8.
  \]
  Thus $N_\theta\ge\lceil h/16\rceil$ or
  $\mathrm D(A,\pi)\ge\lceil h/(16Q)\rceil$.
  Both integers are positive and at most
  $h<k_{\ref{lem:global-conditioning}}\log n$, so they lie in the
  range of Proposition~\ref{prop:defect-tails}.
  Proposition~\ref{prop:defect-tails} bounds the probability of
  their union by
  $4e^{-c_{\ref{prop:defect-tails}}nh/(16Q)}$.

  The values of $\ell,q_*$ and the event in
  Lemma~\ref{lem:transversality} are determined by
  $\mathcal G_\pi$. Outside the event in
  Lemma~\ref{lem:transversality}, if $q_*\le\ell/2$ and
  $\ell\ge\lceil h/2\rceil$, then
  $\ell-q_*\ge\ell/2\ge h/4$.
  Moreover, $V_X$ is the row space of $A$, and
  $V_X^\perp=\ker(A)$. Proposition~\ref{prop:fixed-skeleton}
  therefore gives a conditional bound
  $e^{-c_{\ref{prop:fixed-skeleton}}nh}$ for $\mathcal N_h$.
  Averaging over the matching and the pair sums, we obtain
  \[
   \Prob_{A,\pi}[\mathcal N_h,\ \ell\ge\lceil h/2\rceil]
   \le4e^{-c_{\ref{prop:defect-tails}}nh/(16Q)}
     +e^{-c_{\ref{lem:transversality}}n^2}
     +e^{-c_{\ref{prop:fixed-skeleton}}nh}.
  \]
  Since $h\le n$, the term $e^{-c_{\ref{lem:transversality}}n^2}$
  is at most $e^{-c_{\ref{lem:transversality}}nh}$.
  Also, $n^{\lceil h/2\rceil}\le e^{h\log n}$ and
  $h\log n=o(nh)$. Thus~\eqref{eq:exact-capture} gives,
  for some $c>0$ depending
  only on $\lambda$ and all sufficiently large $n$,
  \begin{equation}\label{eq:exact-nonsparse}
   \Prob_A[\mathcal N_h]\le e^{-cnh}.
  \end{equation}

  For $k\ge k_{\ref{lem:global-conditioning}}\log n$,
  the proposition follows from~\eqref{eq:large-corank-conditioning}.
  For $k_{\ref{cor:nonsparse-tail}}\le k<
  k_{\ref{lem:global-conditioning}}\log n$, splitting according to
  the exact corank and applying~\eqref{eq:exact-nonsparse} gives
  \[
  \begin{aligned}
   &\Prob[\corank(A)\ge k,\
        \ker(A)\cap\mathrm{Sparse}(\delta)=\{0\}]\\
   &\quad\le
   \sum_{h=k}^{\lceil k_{\ref{lem:global-conditioning}}\log n\rceil-1}
         e^{-cnh}
   +e^{-c_{\ref{lem:global-conditioning}}n
         \lceil k_{\ref{lem:global-conditioning}}\log n\rceil}\\
   &\quad\le
   \left(\frac1{1-e^{-cn}}+1\right)
         e^{-\min\{c,c_{\ref{lem:global-conditioning}}\}nk}.
  \end{aligned}
  \]
  We have $1/(1-e^{-cn})+1\le3$ for large $n$.
  Taking $c_{\ref{cor:nonsparse-tail}}$ smaller than half the
  minimum in the last exponent gives the required bound, since
  $3\le e^{c_{\ref{cor:nonsparse-tail}}nk}$ for all sufficiently
  large $n$.
  \end{proof}

  \begingroup
  \section*{Statement on the Use of Artificial Intelligence}
  The main ideas, results, and proof strategy of this paper were
  developed by the authors. During the preparation of the manuscript,
  ChatGPT and Codex (OpenAI), including the GPT-5.6 Sol and GPT-6 Pro
  models, were used as auxiliary tools to check and refine some technical
  details of the row pairing and thin--thick arguments after the underlying
  ideas and proof structure had been supplied by the authors, and to assist
  with language editing. All AI-assisted material included in the manuscript
  was independently reviewed and verified by the authors, who take full
  responsibility for the mathematical content and final text.
  \endgroup

  \begingroup
  \section*{Acknowledgments}
  This work began with a discussion between H. Liang and K. Yu.\@
  H. Liang and K. Yu are also grateful to Zengjing Chen and Hanchao Wang
  for making it possible for them to discuss this work in detail at the seminar.
  T. Yu is grateful to his advisor, Alexander Litvak, for his continued
  support, encouragement, and many helpful suggestions.
  \endgroup

  \bibliographystyle{amsplain}
  \bibliography{ref}

  \vspace{1.2cm}

  \par\bigskip
  \begingroup
  \small
  \setlength{\parindent}{0pt}

  \textsc{Huaijin Liang}\par\nobreak
  \smallskip
  Institute for Financial Studies,
  Shandong University, Jinan 250100, China.\par\nobreak
  \smallskip
  \textit{Email address:} \texttt{lianghuaij@gmail.com}\par

  \bigskip

  \textsc{Kexin Yu}\par\nobreak
  \smallskip
  School of Mathematics, Shandong University,
  Jinan 250100, China.\par\nobreak
  \smallskip
  \textit{Email address:} \texttt{kexinyu01@gmail.com}\par

  \bigskip

  \textsc{Tingzhou Yu}\par\nobreak
  \smallskip
  Department of Mathematical and Statistical Sciences,
  University of Alberta, Edmonton, AB T6G 2R3, Canada.\par\nobreak
  \smallskip
  \textit{Email address:} \texttt{tingzho1@ualberta.ca}\par

  \endgroup

  \end{document}